\newtheorem{thm}{Theorem}[section]
\newtheorem{lma}{Lemma}[section]
\newtheorem{prop}{Proposition}[section]
\newtheorem{cor}{Corollary}[section]
\theoremstyle{definition}
\theoremstyle{remark}
\newtheorem{remark}{Remark}[section]
\numberwithin{equation}{section}
\newcommand{\tr}{\mbox{tr}}
\newcommand{\Ric}{\mbox{Ric}}
\newcommand{\R}{\mathbb R}
\newcommand{\be}{\begin{equation}}
\newcommand{\ee}{\end{equation}}
\newcommand{\bee}{\begin{equation*}}
\newcommand{\eee}{\end{equation*}}
\def\p{\partial}
\def\la{\langle}
\def\ra{\rangle}
\def\lf{\left}
\def\ri{\right}
\def\Pi{\displaystyle{\mathbb{II}}}
\def\dist{\text{dist}}
\def\S{\Sigma}
\renewcommand\L{\triangle}
\newcommand\D{\nabla}
\newcommand\ric{{\rm Ric}}
\renewcommand\d{\partial}
\newcommand\g{\gamma}
\newcommand\bbR{{\mathbb R}}
\renewcommand\d{\partial}
\renewcommand\L{\triangle}
\newcommand\beq{\begin{equation}}
\newcommand\eeq{\end{equation}}
\newcommand\ben{\begin{enumerate}}
\newcommand\een{\end{enumerate}}
\newcommand\bit{\begin{itemize}}
\newcommand\eit{\end{itemize}}
\newcounter{mnotecount}[section]
\title{Variational and rigidity properties of static potentials}
\author{Gregory J. Galloway}
\author{Pengzi Miao}
\affil{Department of Mathematics\\ University of Miami }
\begin{document}
\date{}
\maketitle

\begin{abstract}  
In this paper we study some global properties of static potentials on  asymptotically flat $3$-manifolds $(M,g)$ in the nonvacuum setting.   Heuristically, a static potential $f$ represents the (signed) length along $M$ of an irrotational timelike Killing vector field, which can degenerate on surfaces corresponding to the zero set of $f$.  Assuming a suitable version of the null energy condition, we prove that a noncompact component of the zero set must be area minimizing.  From this we obtain some rigidity results for static potentials that have noncompact zero set components, or equivalently, that are unbounded.  Roughly speaking, these results show, at the pure initial data level, that `boost-type' Killing vector fields can exist only under special circumstances.  
\end{abstract}

\section{Introduction}  

Consider a static spacetime
\beq \label{eq-static}
\bar M = \bbR \times M \,, \quad \bar g = -f^2 dt^2 + g ,
\eeq
where $ (M, g)$ is a Riemannian $3$-manifold and $f$ is a positive function on $M$.
The function $f$  and the Ricci tensors $\ric_{\bar {M}}$, and $ \ric $, of $(\bar M, \bar g)$, and $(M,g)$, respectively,  are related by,
\begin{align}
\D^2 f &= f (\ric   -  \ric_{\bar M} |_{M} )  \label{ein1}  \,,\\
\L f &=  \ric_{\bar M}(u,u)  f \label{ein2} \, ,
\end{align}
where 
$ \ric_{\bar M}  |_{M} $ denotes the restriction of $ \ric_{\bar M}  $ to the tangent space of $M$ and
$ u = f^{-1} \p_t $ is the future timelike unit normal to $M$ in $(\bar{M}, \bar{g})$. 
When $(\bar M, \bar g)$ satisfies the Einstein equation, then Equations \eqref{ein1} and \eqref{ein2}  are the (in general nonvacuum) Einstein field equations for a static spacetime.  

Let $ R_{\bar M}$ and $ R$ be the scalar curvature of $(\bar M, \bar g)$ and $(M,g)$, respectively.
In terms of the Einstein tensor,
\beq
G = \ric_{\bar M} -\frac12 R_{\bar M} \, \bar {g}   \, ,
\eeq
Equations  \eqref{ein1} and \eqref{ein2}  become,
\begin{align}
\D^2 f &= f [\ric -  \g + \frac12(\tr \g - \rho)g]  \,,  \label{ein3}\\
\L f &=  \frac12(\rho + \tr \g)   f  \,,\label{ein4}
\end{align}
where $\rho = G(u,u) =  \frac12 R $,  
and $\g$ is $G$ restricted to $TM$.  If one assumes the Einstein equation holds:  $G =T$,  where $T$ is the energy-momentum tensor, then decay conditions on $\rho$ and $\gamma$ may be viewed as decay conditions on $T$.

More generally, for a given Riemannian $3$-manifold $(M,g)$, scalar field $\rho$ and symmetric $2$-tensor $\g$,  a {nontrivial} smooth function $f$, without any sign assumptions, will be called a {\it static potential}  
for  $(M,g, \rho,\gamma)$ provided equations \eqref{ein3} and \eqref{ein4} hold.  Static potentials in the {\it vacuum} case ($\rho = 0$, $\g = 0$) arose in the work of Corvino \cite{Corvino}, where they correspond to nontrivial elements in the kernel of the adjoint of the linearized scalar curvature map (see also \cite{Fischer}).
From a slightly different point of view, in studying static potentials, we are in essence  considering Killing initial data \cite{Beig-Chrusciel-97} with zero shift.
 
In this paper we establish some global properties of static potentials for asymptotically flat $3$-manifolds $(M,g)$, subject to natural energy and decay conditions on $\rho$ and $\gamma$.  
In Section 2 we present some local and asymptotic properties of static potentials. 
In Section 3 we establish a basic restriction on the occurrence of minimal surfaces having boundaries which lie on the zero set of a static potential; cf. Theorem \ref{non-minimal}. This result is used in Section 4 to show that a noncompact component of the zero set must be area minimizing; cf. Theorem \ref{thm-minimizing}.  From this we obtain some nonexistence and rigidity results for static potentials 
that have noncompact zero set components, or equivalently, that are unbounded.  Roughly speaking, these results show, at the pure initial data level, that `boost-type' Killing vector fields can exist only under special circumstances.  See \cite[Theorem 1.1]{Beig-Chrusciel-97b} for a related spacetime result, which applies to asymptotically flat spacetimes that admit boost domains.

\section{Preliminaries} \label{sec-p}
 
 In this section,  we 
 collect
 some preliminary  
 results concerning a nontrivial solution $f$ to 
 \be \label{ein3-1}
\D^2 f = f [\ric -  \g + \frac12(\tr \g - \rho)g]  
 \ee
on $(M, g, \gamma, \rho)$, where $M$ is always assumed to be  connected. 
 
We start  with some local  properties   of the zero set of $f$. 
The following lemma is an analogue of   \cite[Lemma 2.1]{static-af} (which focused  on the case $ \rho = 0 $ and $ \gamma =0$).

\begin{lma}\label{lma-static-basic} 
Suppose $f$ is a nontrivial solution to \eqref{ein3-1}. 
  Let $ \S =f^{-1}(0)$. Suppose $ \Sigma \neq \emptyset $. 
\begin{enumerate}
  \item [(i)] $\S$ is a totally geodesic hypersurface and
   $|\nabla f|$ is a positive constant on each connected component of $ \S$.
  \item [(ii)]  $\nabla f$ is an eigenvector of  $\Ric$ along $ \Sigma$. 
\item [(iii)]  Suppose $ \rho = 0$ and $ \gamma = 0 $  along $ \S$.
At  any  $p \in \S$, let $\{ e_1,e_2, e_3 \}$ be an  orthonormal frame
 such that   $e_3 $ is normal to $\S$.
 Let $ R_{ijkl}$ denote the component of the curvature tensor in this frame such that 
 $ R_{ijij} $ equals the sectional curvature of the tangent $2$-plane spanned by $ e_i$ and $e_j$ for $ i \neq j \in \{ 1, 2, 3 \}$. 
Then 
$$  R_{1313} = R_{2323} = - \frac12 R_{1212} . $$
As a result,  $ R = 0 $ along $ \Sigma$ where $ R$ is the scalar curvature 
$(M, g)$
and 
\be
\Ric |_{T \Sigma} = \frac12 K g |_{ T \Sigma} , \ \ \Ric(e_3, e_3) = - K ,
\ee
where $ \Ric |_{T \Sigma}$, $ g|_{T\Sigma}$ denote the restriction of $ \Ric $, $g$  to the tangent space to $\Sigma$ respectively, and $K$ is the 
 the Gaussian curvature of $ \Sigma$. 
\end{enumerate}

\end{lma}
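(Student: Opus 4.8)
The plan for (i) is to use that \eqref{ein3-1} forces $\nabla^2 f\equiv 0$ on $\S$, since $f=0$ there. First I would check that $\S$ is a regular hypersurface, i.e. that $\nabla f\neq 0$ on $\S$. If $f(p)=0$ and $\nabla f(p)=0$ for some $p$, then for any unit-speed geodesic $c(s)$ with $c(0)=p$ the function $\phi(s)=f(c(s))$ satisfies the linear ODE $\phi''=A(s)\phi$ with $A(s)=[\Ric-\gamma+\tfrac12(\tr\gamma-\rho)g](c',c')$, obtained by contracting \eqref{ein3-1} with $c'\otimes c'$; since $\phi(0)=\phi'(0)=0$, uniqueness gives $\phi\equiv 0$, so $f$ vanishes on a whole neighborhood of $p$. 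Hence $\{f=0,\ \nabla f=0\}$ is open and closed, forcing $f\equiv 0$ and contradicting nontriviality. Thus $0$ is a regular value and $\S$ is a smooth embedded surface with unit normal $N=\nabla f/|\nabla f|$. Its second fundamental form satisfies $\mathrm{II}(X,Y)=|\nabla f|^{-1}\nabla^2 f(X,Y)=0$ for $X,Y\in T\S$, so $\S$ is totally geodesic; and for $X\in T\S$, $X(|\nabla f|^2)=2\nabla^2 f(\nabla f,X)=0$ on $\S$, so $|\nabla f|$ is a positive constant on each component.

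For (ii) and (iii) the common device is to differentiate the equation once. Writing $T=\Ric-\gamma+\tfrac12(\tr\gamma-\rho)g$, so that $\nabla_i\nabla_j f=f\,T_{ij}$, differentiation gives $\nabla_k\nabla_i\nabla_j f=(\nabla_k f)T_{ij}+f\nabla_k T_{ij}$, hence on $\S$ one has $\nabla_k\nabla_i\nabla_j f=(\nabla_k f)T_{ij}$. Antisymmetrizing in $k,i$ and using the commutation identity $\nabla_k\nabla_i\nabla_j f-\nabla_i\nabla_k\nabla_j f=\sum_l R_{kijl}\nabla_l f$ (in the convention $R_{ijij}=K$ of the statement), then passing to an orthonormal frame with $e_3=N$ so that $\nabla_l f=|\nabla f|\delta_{l3}$ and dividing by $|\nabla f|$, I arrive at the master relation $\delta_{k3}T_{ij}-\delta_{i3}T_{kj}=R_{kij3}$ on $\S$. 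For (ii) I choose indices so that both deltas vanish: $(k,i,j)=(1,2,2)$ and $(1,2,1)$ give $R_{1223}=R_{1213}=0$ with no hypothesis on $\rho,\gamma$. Since $\Ric(e_3,e_a)=\sum_j R_{3jaj}$ reduces, via the curvature symmetries, to $\pm R_{1223}$ or $\pm R_{1213}$ for $a=1,2$, this yields $\Ric(e_3,e_1)=\Ric(e_3,e_2)=0$, i.e. $\nabla f$ is an eigenvector of $\Ric$ along $\S$.

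For (iii) the hypothesis $\rho=\gamma=0$ on $\S$ reduces $T$ to $\Ric$ there. Taking $(k,i,j)=(3,1,1)$ and $(3,2,2)$ in the master relation gives $\Ric_{11}=R_{3113}$ and $\Ric_{22}=R_{3223}$; combining with $\Ric_{11}=R_{1212}+R_{1313}$ and $\Ric_{22}=R_{1212}+R_{2323}$ and the symmetries $R_{3113}=-R_{1313}$, $R_{3223}=-R_{2323}$ yields $R_{1313}=R_{2323}=-\tfrac12 R_{1212}$. Summing the three sectional curvatures gives $R=2(R_{1212}+R_{1313}+R_{2323})=0$ on $\S$. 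Because $\S$ is totally geodesic, the Gauss equation identifies its Gaussian curvature with the ambient sectional curvature, $K=R_{1212}$, so $\Ric_{11}=\Ric_{22}=\tfrac12 K$ and $\Ric_{33}=R_{1313}+R_{2323}=-K$; finally $(k,i,j)=(3,1,2)$ gives $\Ric_{12}=R_{3123}=-R_{1323}$ while $\Ric_{12}=R_{1323}$, so $\Ric_{12}=0$, completing $\Ric|_{T\S}=\tfrac12 K\,g|_{T\S}$.

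The work here is essentially bookkeeping rather than conceptual: because \eqref{ein3-1} is vacuous on $\S$ at zeroth order, the curvature information only emerges after one differentiation, and the main care is in applying the commutation identity with the sign convention $R_{ijij}=K$ and in selecting exactly the index combinations that isolate the wanted components. The one genuinely analytic point is the regularity of $\S$ in (i), which rests on the ODE-uniqueness argument rather than on any algebraic identity.
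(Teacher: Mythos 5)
Your argument is correct, and parts (i) and (iii) follow the paper's proof essentially verbatim: the same ODE/open--closed argument for regularity of $\Sigma$, the same conclusion that $\nabla^2 f=0$ on $\Sigma$ gives total geodesy and constancy of $|\nabla f|$, and the same differentiation-plus-Ricci-identity computation yielding $R_{\a\b}=-R_{3\a3\b}$ and hence $R_{1313}=R_{2323}=-\tfrac12 R_{1212}$. The one genuine divergence is part (ii): the paper gets $\Ric(\nu,X)=0$ in one line from the Codazzi equation, using only that $\Sigma$ is totally geodesic, whereas you extract it from your ``master relation'' by choosing index triples $(1,2,2)$ and $(1,2,1)$ for which both Kronecker deltas vanish. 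Your route is economical in that (ii) and (iii) fall out of the same identity, and it correctly needs no hypothesis on $\rho,\gamma$ for (ii) since the tensor $T$ drops out of those components; the Codazzi route is slightly more robust in that it requires no differentiation of the equation (hence no smoothness of $\rho,\gamma$ beyond what is needed to state \eqref{ein3-1}) and no bookkeeping of curvature sign conventions. Both are valid, and your self-consistency checks (e.g.\ $\Ric_{11}=R_{1212}+R_{1313}$ against $\Ric_{11}=-R_{1313}$) confirm that your sign convention is coherent with the stated conclusion.
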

\begin{proof} (i)
Let   $ p \in \S$. If $ \nabla f(p)=0$, then along any geodesic $\beta(t)$ emanating from $p$, $f(t): = f(\beta(t))$ satisfies 
$$f''(t)= h(t) f(t), \ f'(0) = 0 , \ f(0) = 0 , $$
where
$ h(t)  = \Ric (\beta'(t), \beta'(t) ) - \gamma (\beta'(t), \beta'(t) ) + \frac12 ( \tr \gamma - \rho )  (\beta(t) ) $.
This implies $f$ is zero near $p$. 
Now consider the set  
$$ \Lambda = \{ x \in M \ | \ f (x) = 0 \ \mathrm{and} \ \ \nabla f(x) = 0 \} .$$  
Clearly, $ \Lambda$ is a closed set. The above argument shows that $ \Lambda $ is also  open. 
Since $ p \in \Lambda$ and $M$ is connected, we conclude $\Lambda = M$, contradicting  the fact that $f$ is nontrivial. 
Therefore $ \nabla f (p) \neq 0$, $ \forall \ p \in \Sigma$, 
which implies   $\S$ is an embedded surface. Along $ \Sigma$, 
 \eqref{ein3} shows   $ \nabla^2 f (X, Y) $ = 0  and $ \nabla^2 f (X, \nabla f ) = 0 $ for any tangent vectors  $X, Y$ tangential to $ \S$.
This  readily implies that  $ \S$ is totally geodesic and  $| \nabla f |^2  $ is a constant along $ \Sigma$.

(ii)  Since  $\S$ is totally geodesic,  it follows from the Codazzi equation that   
\be \label{eq-Ric-cross}
\Ric(\nu,X)=0
\ee 
for all $X$ tangent to $ \S$, where $\nu$ is the unit normal of $\S$.
Therefore, $ \nabla f = \frac{\p f}{\p \nu} \nu $ is an eigenvector of $ \Ric$.

(iii)    Let   $ R_{ij } = \Ric (e_i, e_j)$, where $ i, j \in  \{ 1, 2, 3 \}$. 
 Differentiating \eqref{ein3-1}
 and applying the fact  $\rho = 0$ and $\gamma = 0 $ along $\Sigma$,    we have 
$ f_{; ijk} = f_{;k} R_{ij}  $. 
Setting $ k = 3 $ gives
 \bee 
 f_{;ij 3} = f_{; 3} R_{ij} . 
 \eee
On the other hand, 
\be \label{eq-d-f-1-2}
f_{;ij3} - f_{;i3j} =  - \sum_{l=1}^3 R_{ l i 3 j} f_{; l} = - R_{3i3j} f_{;3} .
\ee
Therefore,
\be \label{eq-d-f-2}
\begin{split}
f_{;3} R_{\alpha \beta} = & \ f_{; \alpha 3 \beta} - R_{3 \alpha 3 \beta} f_{;3} 
=  - R_{3 \alpha 3 \beta} f_{;3} , 
\end{split} 
\ee
where  $ \alpha, \beta \in \{ 1, 2 \}$ and we used the fact  $ f_{; ij} = 0 $ along $ \Sigma$. 
Since $ f_{;3} \neq  0 $, \eqref{eq-d-f-2}  implies 
\be \label{eq-d-f-3}
R_{\alpha \beta} = - R_{3 \alpha 3 \beta}  .
\ee
By the definition of $R_{\alpha \beta}$, this is equivalent to 
\be \label{eq-d-f-4}
 R_{1\alpha1\beta} + R_{2\alpha 2\beta } + R_{3 \alpha 3 \beta}  = - R_{3 \alpha 3 \beta}. 
\ee
By taking  $ \alpha = \beta = 1 $ and $ 2 $ respectively, \eqref{eq-d-f-4} implies 
\be \label{eq-d-f-5}
R_{3131} = R_{3232} = - \frac12 R_{1212} ,
\ee
where  $ R_{1212} = K $ since $\Sigma$ is totally geodesic. 
This completes the proof. 
\end{proof}

Next, we assume that $M$ is diffeomorphic  to $ \R^3 \setminus B $,
where $B$ is an open  Euclidean ball centered at the origin,
and $g$ is a smooth metric on $ M$ such that
with respect to the standard coordinates $\{x_i \}$ on $ \R^3$,
$g$ satisfies
\be\label{eq-AF-def}
g_{ij}=\delta_{ij}+b_{ij} \  \ \mathrm{with} \ \
b_{ij} = O_2 (|x|^{-\tau})
\ee
for some constant  $\tau \in ( \frac12 , 1]$.  
We also assume the   symmetric $(0,2)$ tensor $\gamma$ 
and the scalar field $ \rho$ satisfy 
\be \label{eq-gamma-decay}
\gamma_{ij} = O ( |x|^{-2 -\tau} ) , \ \ \rho = O ( | x|^{- 2 - \tau} ) .
\ee
 
The following proposition concerning the asymptotic behavior of $f$ near infinity follows
 from \cite[Proposition 2.1]{Beig-Chrusciel-96} (by setting the shift vector $Y$ in  \cite{Beig-Chrusciel-96} equal to zero).

\begin{prop}\label{prop-AF-static-f}
Suppose  $f$ is a solution to \eqref{ein3-1} on  $(M, g, \rho, \gamma)$
satisfying \eqref{eq-AF-def} and \eqref{eq-gamma-decay}. 
Then
\begin{enumerate}
\item[(i)] there exists a   tuple $(a_1, a_2, a_3) \, \in \R^3  $ such that
 \bee \label{eq-f-xi}
  f= a_1 x_1 + a_2 x_2 + a_3 x_3 + h
 \eee
 where  $h$ satisfies $  \p h  = O_1 ( | x |^{ - \tau} )$
and
\bee \label{eq-condition-h-0}
| h | =
\lf\{
\begin{array}{lc}
O( |x|^{1-\tau}) & \  \mathrm{when} \   \tau < 1, \\
 O ( \ln |x| ) & \  \mathrm{when}   \ \tau = 1.
 \end{array}
 \ri.
\eee

\item[(ii)]  
$ (a_1, a_2, a_3) = (0, 0, 0) $ if and only if  $ f $ has a finite limit at $\infty$. 
 In this case, $ \lim_{ x \rightarrow \infty} f =0 $  only if $f$ is  identically zero.
\end{enumerate}

\end{prop}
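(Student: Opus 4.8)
The plan is to treat \eqref{ein3-1} as a decaying perturbation of the flat Hessian equation and to read off the affine behaviour of $f$ from its gradient. Working in the standard coordinates $\{x_i\}$, I would first rewrite \eqref{ein3-1} as
\beq
\partial_i\partial_j f = \Gamma^k_{ij}\,\partial_k f + f\,E_{ij},\qquad E_{ij} := \Big(\ric - \g + \tfrac12(\tr\g - \rho)g\Big)_{ij},
\eeq
and record the decay of the coefficients: \eqref{eq-AF-def} gives $\Gamma^k_{ij}=O(|x|^{-1-\tau})$ and $\ric=O(|x|^{-2-\tau})$, so with \eqref{eq-gamma-decay} the tensor $E=O(|x|^{-2-\tau})$. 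In the model flat case ($E\equiv0$, $\Gamma\equiv0$) the solutions are precisely the affine functions $a\cdot x + c$; the point is that the decaying right-hand side is integrable enough that this affine structure persists at infinity.

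The crux, and what I expect to be the main obstacle, is an a priori linear growth bound $f=O(|x|)$, $\partial f=O(1)$. I would obtain this by asymptotic integration: along rays the second derivatives obey $\partial^2 f = O(|x|^{-1-\tau})\,|\partial f| + O(|x|^{-2-\tau})\,|f|$, and since $\int^\infty r^{-1-\tau}\,dr<\infty$ for $\tau>0$ the equation is a regular perturbation of $\partial^2 f=0$; comparison with the scalar ODE $\ddot u = q(r)u$ satisfying $\int^\infty r\,|q|\,dr<\infty$ forces every solution to be asymptotic to $c_1+c_2 r$, ruling out quadratic and faster growth. It is exactly the full Hessian equation, and not merely its trace, that kills the higher harmonics $r^{\ell}Y_\ell$ otherwise allowed on the exterior domain. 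Once $f=O(|x|)$ and $\partial f=O(1)$ are in hand, the right-hand side above is $O(|x|^{-1-\tau})$, which is integrable along rays; integrating $\partial^2 f$ from a large sphere out to infinity, and controlling the angular variation so that the limit is direction-independent, produces a constant vector $(a_1,a_2,a_3)$ with $\partial_i f - a_i = O(|x|^{-\tau})$ and $\partial^2 f = O(|x|^{-1-\tau})$, i.e. $\partial h = O_1(|x|^{-\tau})$ for $h:=f-a\cdot x$. A final radial integration of $\partial h$ yields $|h|=O(|x|^{1-\tau})$ when $\tau<1$ and $|h|=O(\ln|x|)$ when $\tau=1$, which is (i).

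For (ii), one direction is immediate: if $f$ has a finite limit then $f$ is bounded, so the linear part $a\cdot x = f-h$ is bounded and hence $a=0$. For the converse I would feed the sublinear bound $f=O(|x|^{1-\tau})$ back into the trace of \eqref{ein3-1}, namely $\Delta_g f = (\tr E)\,f$ with $\tr E = O(|x|^{-2-\tau})$, rewritten as $\Delta_e f = (\Delta_e - \Delta_g)f + (\tr E)\,f$. Using $\partial f = O(|x|^{-\tau})$ and $\partial^2 f = O(|x|^{-1-\tau})$, the entire right-hand side is $O(|x|^{-1-2\tau})$, which decays faster than $|x|^{-2}$ since $\tau>\tfrac12$; Newtonian potential theory then gives $f = c_0 + O(|x|^{-1})$ for a constant $c_0$, so $f$ has the finite limit $c_0$.

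The last assertion, that $c_0=\lim f=0$ forces $f\equiv 0$, is the delicate endpoint, since mere decay does not by itself kill the leading multipole. Here I would exploit the full Hessian equation through the identity $\nabla|\nabla f|^2 = 2f\,E(\nabla f,\cdot)$ coming from \eqref{ein3-1}: starting from $f=O(|x|^{-1})$ one integrates this from infinity and alternates with the scalar bootstrap above to improve the decay of $f$ and $\nabla f$. I expect that closing this step to $f\equiv 0$ requires pushing the decay below every polynomial order and invoking unique continuation at infinity for $\Delta_g f = (\tr E)\,f$, or else a direct appeal to the analysis of \cite{Beig-Chrusciel-96}, from which the entire proposition follows by setting the shift vector $Y=0$.
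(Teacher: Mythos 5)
First, a point of comparison: the paper does not actually prove this proposition --- it is quoted from \cite[Proposition 2.1]{Beig-Chrusciel-96} with the shift vector set to zero --- so your direct argument is being measured against that reference rather than against anything in the text. Your architecture for part (i) and for the first assertion of (ii) is sound and is essentially the standard asymptotic-integration proof. The cleanest implementation of your ray-wise comparison is to restrict \eqref{ein3-1} to unit-speed $g$-geodesics $\beta$ emanating from a fixed large coordinate sphere: the covariant Hessian then restricts exactly to $\tfrac{d^2}{dt^2}$, giving the scalar ODE $u''=q\,u$ for $u=f\circ\beta$ with $q=O(t^{-2-\tau})$, hence $\int^\infty t\,|q(t)|\,dt<\infty$, and Levinson-type asymptotic integration yields the uniform linear bound (the Christoffel corrections you worry about along Euclidean rays never appear). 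Your subsequent radial integration to produce $(a_1,a_2,a_3)$, the control of angular variation, and the Newtonian-potential bootstrap in the case $a=0$ (where $\tau>\tfrac12$ pushes the right-hand side below the critical decay $|x|^{-2}$) are all correct.

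The genuine gap is the final assertion, $\lim_{x\to\infty}f=0\Rightarrow f\equiv 0$, which you explicitly leave open and propose to settle either by ``unique continuation at infinity'' for the trace equation or by falling back on the citation. Neither is necessary: the ODE machinery you have already set up closes it. For $u''=q\,u$ on $[t_0,\infty)$ with $\int^\infty t\,|q|\,dt<\infty$, the two-dimensional solution space is spanned by solutions $u_1\to 1$ and $u_2\sim t$ (built by contraction from the integral equations $u_1(t)=1+\int_t^\infty(s-t)q(s)u_1(s)\,ds$, etc.), their Wronskian tends to $1$, and the resulting map from solutions to asymptotic data $(c_1,c_2)$ is a linear bijection; consequently $u=o(1)$ forces $c_1=c_2=0$ and hence $u\equiv 0$ on the whole ray, not merely at infinity. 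Running this along every geodesic ray from a fixed sphere gives $f\equiv 0$ near infinity; then $f$ and $\nabla f$ vanish at some point, and the open-and-closed argument of Lemma \ref{lma-static-basic}(i) (integrating $f''=h\,f$ along all geodesics through that point) propagates $f\equiv 0$ to all of $M$. Note that your instinct is right that the trace equation alone cannot do this --- a decaying solution of $\Delta_g f=(\tr E)f$ need not vanish --- but the identity $\nabla|\nabla f|^2=2fE(\nabla f,\cdot)$ is also not the right tool; it is the full Hessian equation restricted to geodesics, i.e.\ the same mechanism that gives you the linear growth bound, that supplies the rigidity.
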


Proposition \ref{prop-AF-static-f} itself implies that the zero set of an unbounded $f$
 near infinity   has a controlled graphical structure.

\begin{prop}  \label{prop-AF-static-zeroset}
Suppose $f$ is  an 
unbounded 
solution to \eqref{ein3-1} on  $(M, g, \rho, \gamma)$
satisfying \eqref{eq-AF-def} and \eqref{eq-gamma-decay}. 
Then there exists a new set of  coordinates $\{ y_i \}$
near the infinity of $(M,g)$,
obtained by
a rotation of $\{x_i \}$ such that, outside a compact set,
$ f^{-1} (0)  $
is given by the graph of a smooth  function 
$ q = q (\bar {y}) $, where $\bar y = (y_2, y_3) $,  over
$$ \Omega_C = \{ (0, y_2, y_3) \ | \ y_2^2 + y_3^2 > C^2 \}  $$ 
for some constant $ C>0$, where   $ q $ satisfies
\be \label{eq-condition-q}
    \p q   = O_1 (|\bar{y}|^{-\tau} )
\  \ \mathrm{and} \ \
| q | =
\lf\{
\begin{array}{ll}
O( | \bar{y}|^{1-\tau}) & \  \mathrm{when} \   \tau < 1 \\
 O ( \ln |\bar{y}| ) & \  \mathrm{when}   \ \tau = 1.
 \end{array}
 \ri.
\ee
As a result,
if $\gamma_{_R} \subset f^{-1}(0) $ is the curve   given by
$$ \gamma_{_R} = \{ ( q( y_2, y_3) , y_2, y_3) \ | \ y_2^2 + y_3^2 = R^2 \}  $$
and $\kappa $ is the geodesic curvature of $ \gamma_{_R} $ in $f^{-1}(0)$, then
\be  \label{eq-g-curvature}
\lim_{R \rightarrow \infty} \int_{\gamma_{_R}}  \kappa \  d s = 2 \pi .
\ee
\end{prop}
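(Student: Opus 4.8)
The plan is to first reduce to a normal form using Proposition \ref{prop-AF-static-f}. Since $f$ is unbounded, part (ii) of that proposition forces the linear coefficients $(a_1,a_2,a_3)$ to be nonzero. I would then apply the rotation of the $\{x_i\}$ that sends the linear functional $a_1 x_1 + a_2 x_2 + a_3 x_3$ to $a\,y_1$, where $a = \sqrt{a_1^2+a_2^2+a_3^2} > 0$; since a rotation preserves $|x|$ and all the decay classes $O_k(|x|^{-\tau})$, in the new coordinates $f = a\,y_1 + h$ with $\partial h = O_1(|y|^{-\tau})$ and $|h|$ bounded by $|y|^{1-\tau}$ (or $\ln|y|$ when $\tau = 1$). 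Everything then reduces to locating and parametrizing the level set $\{a y_1 + h = 0\}$ near infinity.

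The crux is to confine the zero set to a thin slab around the plane $\{y_1 = 0\}$ and then invoke monotonicity in $y_1$. On $f^{-1}(0)$ one has $a\,y_1 = -h$, so $a|y_1| \le C|y|^{1-\tau}$. A short bootstrap using $|y|^2 = y_1^2 + |\bar y|^2$ shows that for $|\bar y|$ large the case $|y_1| > |\bar y|$ is impossible (it would force $|y_1|$ to be bounded), hence $|y| \asymp |\bar y|$ and $|y_1| \le C|\bar y|^{1-\tau}$ (resp. $\le C\ln|\bar y|$) on the zero set. Restricting attention to the slab $\{|y_1| \le 2C|\bar y|^{1-\tau}\}$, in which $|y| \asymp |\bar y|$, I get $\partial_{y_1} f = a + \partial_{y_1} h = a + O(|y|^{-\tau}) \ge a/2 > 0$; so $y_1 \mapsto f(y_1,\bar y)$ is strictly increasing across the slab and, comparing signs at the two faces (where the linear term dominates $h$), changes sign exactly once. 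This produces a unique smooth solution $y_1 = q(\bar y)$ over $\Omega_C$ for $C$ large, and the a priori slab bound guarantees there are no other zeros with $|\bar y|$ large. Implicit differentiation of $f(q(\bar y),\bar y)=0$ gives $\partial_{y_a} q = -\partial_{y_a} h/(a+\partial_{y_1}h) = O(|\bar y|^{-\tau})$, with the $O_1$ improvement inherited from $\partial h = O_1(|y|^{-\tau})$; this is estimate \eqref{eq-condition-q}.

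For the geodesic-curvature statement I would first record that the metric induced by $g$ on the graph, expressed in the $\bar y$ parameters via $X(\bar y) = (q(\bar y),y_2,y_3)$, is $h_{ab} = \delta_{ab} + O(|\bar y|^{-\tau})$ with $\partial h_{ab} = O(|\bar y|^{-1-\tau})$ --- using $g_{ij} = \delta_{ij} + O_2(|x|^{-\tau})$, $|X| \asymp |\bar y|$, and the bounds on $\partial q$. Thus $(\Sigma, g|_\Sigma)$ is an asymptotically flat $2$-metric, and $\gamma_{_R}$ is the image of the coordinate circle $|\bar y| = R$. A direct leading-order computation then gives $\kappa\,ds = (1 + O(R^{-\tau}))\,d\theta$ along this circle (the Euclidean value being exactly $1$, with corrections controlled by the Christoffel symbols $\sim \partial h = O(R^{-1-\tau})$ and the length distortion $O(R^{-\tau})$), so that $\int_{\gamma_{_R}}\kappa\,ds = 2\pi + O(R^{-\tau}) \to 2\pi$. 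As a consistency check, since the Gaussian curvature of $h$ is $O(|\bar y|^{-2-\tau})$ and hence integrable over the end, Gauss--Bonnet on the annuli $\{R_1 \le |\bar y| \le R_2\}$ shows $\int_{\gamma_{_R}}\kappa\,ds$ is Cauchy as $R\to\infty$, so it suffices to identify the limit, which the pointwise estimate does.

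The main obstacle is the global (as opposed to merely local) graphical statement: the implicit function theorem only yields a graph near a single zero, so the real work is the bootstrap slab bound, which simultaneously confines the entire zero set near infinity to the region where $\partial_{y_1}f$ is controlled and rules out stray components. Once that quantitative confinement is in hand, the parametrization, the estimates on $q$, and the geodesic-curvature limit are routine asymptotically flat analysis.
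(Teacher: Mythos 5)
Your argument is correct and is essentially the standard one: the paper itself gives no direct proof, deferring to \cite[Propositions 3.1 and 3.2]{static-af}, and the route taken there is precisely your reduction via Proposition \ref{prop-AF-static-f} --- rotate so the linear part is $a\,y_1$, confine the zero set to a thin slab about $\{y_1=0\}$ where $\partial_{y_1}f\ge a/2$, apply monotonicity/implicit function theorem to get the global graph $q$ with the stated decay, and then read off the geodesic curvature limit from the asymptotic flatness of the induced metric. Your slab bootstrap correctly supplies the global (not merely local) graphical statement, which is indeed the only nontrivial point.
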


Proposition \ref{prop-AF-static-zeroset} follows   from Proposition \ref{prop-AF-static-f} by the exact same proof  in \cite{static-af} that proves
\cite[Proposition 3.2]{static-af} using \cite[Proposition 3.1]{static-af}. 

\section{Static potentials and minimal surfaces}

In this section, we assume $(M, g)$ is a complete,  asymptotically flat $3$-manifold without boundary, with finitely many ends.
The triple ($g$, $\gamma$, $ \rho $) is assumed to  satisfy the decay assumptions \eqref{eq-AF-def} and \eqref{eq-gamma-decay} 
on  each end.   

We say that $(M, g, \gamma, \rho)$ satisfies the null energy condition (NEC) provided,
\beq\label{eq-NE}
\rho + \g(X,X) \ge 0  \quad \text{for all unit vectors } X.  
\eeq
For the static metric \eqref{eq-static}, this is equivalent to requiring $\ric_{\bar M}(K,K) \ge 0$ for all null vectors $K$ along $M$.

The aim of this section is to establish the following theorem. 

\begin{thm}\label{non-minimal} 
Assume $(M, g, \gamma, \rho)$ satisfies the NEC and $f$ is a  static potential. 
Let $ U$ be an unbounded, connected component of $ \{ f \neq 0 \}$. 
Then there does not exist any compact subset $S \subset M$ such that 
$ S \setminus \p U $ is an embedded minimal surface in $U$. 
\end{thm}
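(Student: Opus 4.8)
The plan is to argue by contradiction: suppose such an $S$ exists, and extract a forbidden variational identity from the static equations together with the NEC. Since $U$ is connected and $f\neq 0$ on it, I may assume $f>0$ on $U$. Set $w=f|_S$; then $w>0$ on the minimal surface $S\setminus\partial U$ and $w=0$ on $\partial S\subset\partial U\subset f^{-1}(0)$, and by Lemma \ref{lma-static-basic} the gradient $\nabla f$ is nonvanishing along $f^{-1}(0)$, so the Hopf boundary point lemma applied to the (linear) equation for $w$ gives $\nabla_S w\neq 0$ on $\partial S$. Let $N$ be a local unit normal to $S$ and $A$ its second fundamental form, and recall $\tr A=0$. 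Combining \eqref{ein3} evaluated on $(N,N)$ with \eqref{ein4} and the hypersurface identity $\Delta_S w=\Delta f-\nabla^2 f(N,N)-(\tr A)\,\partial_N f$ yields the key relation
\[
\Delta_S w=\big[\rho+\gamma(N,N)-\Ric(N,N)\big]\,w .
\]

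The second step feeds $w$ into the second variation of area. Because $w$ vanishes on $\partial S$, it is an admissible fixed-boundary normal variation, and the standard formula reads $\delta^2 A(w)=\int_S\big(|\nabla_S w|^2-(|A|^2+\Ric(N,N))w^2\big)$. Using $w=0$ on $\partial S$ to integrate by parts, $\int_S|\nabla_S w|^2=-\int_S w\,\Delta_S w$, and substituting the key relation, the $\Ric(N,N)$ terms cancel and leave
\[
\delta^2 A(w)=-\int_S\big[(\rho+\gamma(N,N))+|A|^2\big]\,w^2 .
\]
By the NEC \eqref{eq-NE} the integrand is nonnegative, so $\delta^2 A(w)\le 0$, with equality only if $S$ is totally geodesic and $\rho+\gamma(N,N)\equiv 0$ along $S$. (Tracing \eqref{ein3} against $g$ and comparing with \eqref{ein4} forces $\rho=\tfrac12 R$, so the Gauss equation can be used to rewrite $\Ric(N,N)$ if a more symmetric form of the potential is preferred.) Thus $S$ admits a fixed-boundary direction of nonpositive second variation, i.e. it is never strictly stable.

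The final step converts this instability into genuine nonexistence. The plan is to replace $S$ by an area-minimizer $S'$ spanning the same curve $\partial S\subset f^{-1}(0)$ and lying in $\overline U$; such an $S'$ is stable, so the identity above forces the equality case, namely $S'$ totally geodesic with $\rho+\gamma(N',N')\equiv 0$. Since $f^{-1}(0)$ is itself totally geodesic (Lemma \ref{lma-static-basic}) and $S'$ meets it along $\partial S'$, a unique-continuation/strong-maximum-principle argument for the resulting linear equation satisfied by $w'=f|_{S'}$ should force $S'\subset f^{-1}(0)$, contradicting $w'>0$ in the interior. \textbf{The main obstacle} is precisely this last step: because $U$ is \emph{unbounded} and asymptotically flat, one must prevent the minimizing competitor from drifting off to infinity, which is where the graphical decay of $f^{-1}(0)$ near infinity (Proposition \ref{prop-AF-static-zeroset}) and the boundary curvature estimate \eqref{eq-g-curvature} are expected to enter. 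Controlling the minimizer in the noncompact end, and then closing the rigidity/unique-continuation step cleanly, is the delicate part of the argument.
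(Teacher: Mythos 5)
Your first two steps are computationally correct: on a minimal surface the static equations \eqref{ein3}--\eqref{ein4} combine with the hypersurface Laplacian identity to give $\Delta_S w=[\rho+\gamma(N,N)-\Ric(N,N)]\,w$ for $w=f|_S$ (this is exactly the paper's equation \eqref{eq-plug} with $H=0$), and inserting $w$ into the second variation and integrating by parts does yield
\begin{equation*}
\delta^2 A(w)=-\int_S\bigl[\rho+\gamma(N,N)+|A|^2\bigr]w^2\,d\sigma\le 0
\end{equation*}
under the NEC \eqref{eq-NE}. But this only shows that a penetrating minimal surface cannot be \emph{strictly stable}; the theorem asserts that no penetrating minimal surface exists at all, stable or not, and your argument never bridges that gap. (There is also a quieter issue: the hypothesis only says $S\setminus\partial U$ is embedded and minimal in $U$, so smoothness of $S$ up to $\partial S$, needed for your Hopf lemma and integration by parts, is itself unjustified; the paper's proof avoids any boundary regularity.)

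The final step, which you correctly flag as the obstacle, is where the strategy fails structurally rather than technically. A Plateau minimizer $S'$ spanning $\partial S$ does exist, but nothing forces it to penetrate $U$: a priori it can lie entirely inside the totally geodesic zero set $f^{-1}(0)$, and in fact this is exactly what happens --- the paper's Theorem \ref{thm-minimizing} shows that Plateau solutions with boundary in a noncompact component of $f^{-1}(0)$ coincide with subsets of $f^{-1}(0)$, \emph{using} Theorem \ref{non-minimal} as input. When $S'\subset f^{-1}(0)$ your test function $w'\equiv 0$, the equality-case analysis is vacuous, and no contradiction about $S'$ can be extracted; nor could one be, since such minimizers genuinely exist. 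Moreover, even a contradiction about $S'$ would not contradict the existence of the original $S$, which is assumed merely minimal, not minimizing or stable; and unique continuation cannot force a surface into $f^{-1}(0)$ merely because $w'$ vanishes on $\partial S'$ --- a positive solution of a linear elliptic equation vanishing exactly on the boundary is perfectly consistent. Note finally that your argument nowhere uses the unboundedness of $U$, which is the crux of the paper's proof: there one introduces the Fermat metric $\tilde g=f^{-2}g$, proves it is complete on $U$ with infinite distance to $\partial U$ (Lemmas \ref{lma-length-bd-1} and \ref{lma-comp}), chooses a $\tilde g$-minimizing geodesic from $S_U=S\setminus\partial U$ to a far coordinate sphere $S_r\cap U$ (nonempty precisely because $U$ is unbounded), and slides $S_U$ along the $\tilde g$-normal exponential map; the NEC gives the monotonicity $\frac{d}{dt}(H/f)\le 0$, so the slid surface arrives with $H\le 0$ while touching the mean-convex sphere $S_r$ from inside, contradicting the strong maximum principle. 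Your pointwise identity is essentially the $t=0$ instance of that monotonicity; what is missing is the mechanism that transports this information out to the mean-convex barrier at infinity.
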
 

In other words, under the given assumptions, a compact minimal surface whose boundary lies on the zero set of $f$ cannot penetrate an unbounded component of $\{f \ne 0\}$.\footnote{Some related results ruling out penetrating marginally outer trapped surfaces, which are closed (compact without boundary) and bounding, in static and stationary spacetimes, are obtained in \cite{Mars08} by different methods.}

The proof makes use of two lemmas which we present first.  Suppose $f$ is a static potential on 
$(M, g, \gamma, \rho)$.  
By Proposition \ref{prop-AF-static-f} (i), 
$ \lim_{ x \rightarrow \infty } | \nabla f |_g $ exists and is finite at each end. 
Therefore, 
\be \label{eq-gradf-bd}
 \Lambda  < \infty , \ \ \mathrm{where} \ \Lambda =   \sup_M | \nabla f |_g .
\ee
In the following lemmas,  we consider properties of the conformally deformed {\it Fermat} metric 
$$ \tilde g = f^{-2} g $$
on the open set $ \{ f \neq 0 \}$. 

\begin{lma} \label{lma-length-bd-1}
Let $U$ be a connected component of $\{ f \neq 0 \}$. 
Let $ p, q \in U$ be two distinct points. Let   $\dist_g (p, q) $ be the distance between $p $ and $ q$ in $(M, g)$. 
Given any curve $ \beta $ in $U$ that connects $p$ and $q$, let $ l(\beta, \tilde g) $ denote 
 the $\tilde g$-length of $\beta$. Then
  \be \label{eq-length-bd-1}
   l (\beta , \tilde g) \ge \frac{1}{ \Lambda } \ln \lf(   1 + \frac{\Lambda  \dist_g (p, q)    }{ \min \{ | f (p) |, | f (q) | \}     } \ri)  .
   \ee
\end{lma}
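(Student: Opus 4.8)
The plan is to compute the $\tilde g$-length directly as a weighted $g$-arc-length integral and then exploit the uniform gradient bound $\Lambda$. First I would observe that since $U$ is connected and $f$ vanishes nowhere on $U$, the function $f$ has a fixed sign there; replacing $f$ by $-f$ if necessary (which preserves \eqref{ein3-1}, being linear in $f$), I may assume $f>0$ on $U$. Parametrize $\beta$ by its $g$-arc length, so that $\beta:[0,L]\to U$ with $\beta(0)=p$, $\beta(L)=q$, $|\beta'|_g\equiv 1$, and $L=l(\beta,g)\ge \dist_g(p,q)$. Because $\tilde g=f^{-2}g$, the $\tilde g$-length is
\[
l(\beta,\tilde g)=\int_0^L \frac{1}{f(\beta(s))}\,ds .
\]

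Next I would control the integrand using the gradient bound. Writing $\phi(s)=f(\beta(s))$, the Cauchy--Schwarz inequality together with the definition \eqref{eq-gradf-bd} of $\Lambda$ gives $|\phi'(s)|=|\la \nabla f,\beta'(s)\ra|\le |\nabla f|_g\le \Lambda$, whence $\phi(s)\le \phi(0)+\Lambda s=f(p)+\Lambda s$. Substituting this upper bound for $\phi$ into the denominator and integrating yields
\[
l(\beta,\tilde g)\ge \int_0^L \frac{ds}{f(p)+\Lambda s}=\frac{1}{\Lambda}\ln\!\lf(1+\frac{\Lambda L}{f(p)}\ri).
\]
Since $L\ge \dist_g(p,q)$ and $t\mapsto \frac{1}{\Lambda}\ln(1+\Lambda t/f(p))$ is increasing, this already establishes \eqref{eq-length-bd-1} but with $|f(p)|$ in place of $\min\{|f(p)|,|f(q)|\}$.

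Finally, to upgrade $|f(p)|$ to the minimum, I would run the identical argument with the roles of $p$ and $q$ interchanged (equivalently, reparametrize $\beta$ starting from $q$), obtaining the same lower bound with $|f(q)|$ in the denominator. Both inequalities hold simultaneously, so the larger of the two right-hand sides is also a valid lower bound; as the expression is decreasing in its denominator, the larger bound is precisely the one carrying $\min\{|f(p)|,|f(q)|\}$, which is \eqref{eq-length-bd-1}. The argument is essentially a one-dimensional comparison estimate, so I do not anticipate a genuine obstacle; the only points requiring slight care are this symmetrization to produce the minimum and keeping track that it is the $g$-length $L$, rather than $\dist_g(p,q)$ itself, that governs the integral before the final monotonicity step.
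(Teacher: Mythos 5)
Your proposal is correct and follows essentially the same argument as the paper: parametrize by $g$-arc length, bound $f(\beta(t))\le f(p)+\Lambda t$ via the gradient bound, integrate, and symmetrize in $p$ and $q$. The only cosmetic difference is that the paper truncates the integral to $[0,\dist_g(p,q)]$ before estimating, whereas you integrate over the full length and invoke monotonicity of the logarithm afterward; these are equivalent.
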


\begin{proof}
Without loss of  generality, we may assume $ f > 0 $ on $ U$.
Let $ l $ be the $g$-length of $ \beta$. 
We parametrize $\beta$ such that 
\be \label{eq-g-normalize}
 \beta (0) = p , \ \ \beta( l ) = q, \ \   |\beta'(t) |_g = 1 ,  \ \forall \ t \in [0, l] . 
 \ee
The $\tilde{g}$-length of $\beta$ is then  given by
\be \label{eq-length-est-1}
\begin{split}
l(\beta, \tilde g)  
= & \  \int_0^l  \frac{1}{ f (\beta (t) )  } d t \\
\ge & \ \int_0^{d }   \frac{1}{ f (\beta (t) ) } d t ,
\end{split}
\ee
where $d =  \dist_g (p,q)  > 0  $. 
By \eqref{eq-g-normalize}, 
we have  
\be
\lf| \frac{d }{ dt}  f ( \beta(t) ) \ri| = \lf|  \la \nabla f , \beta'(t) \ra_g  \ri| \le \Lambda  .
\ee
Therefore, $ \forall \ t \in [ 0, l ]$, 
\be
 f ( \beta (t) )   - f ( \beta (0) )  \le    \Lambda  t .
\ee
This, together with the fact $ f (\beta (t) ) > 0 $,  shows 
\be
\frac{1}{f (\beta (t)  )} \ge \frac{1}{ f ( \beta (0) ) + \Lambda  t} . 
\ee
Hence, by \eqref{eq-length-est-1}, 
\be \label{eq-est-p}
\begin{split}
l ( \beta, \tilde g)  \ge & \ \int_0^{ d }  \frac{1}{ f ( \beta (0) )  + \Lambda  t}  d t \\
= & \ \frac{1}{ \Lambda }  \lf[ \ln {\lf( f ( p  ) + \Lambda  d \ri)  - \ln f ( p ) )  } \ri]  \\
= & \ \frac{1}{ \Lambda } \ln \lf(   1 + \frac{\Lambda  d }{ f (p)   }  \ri) \,,
\end{split}
\ee
Reversing the direction of $\beta$, we have 
\be \label{eq-est-q}
l (\beta, \tilde g) \ge \frac{1}{ \Lambda } \ln \lf(   1 + \frac{\Lambda  d }{ f (q)   }  \ri).
\ee
Therefore, \eqref{eq-length-bd-1} follows from  \eqref{eq-est-p} and \eqref{eq-est-q}. 
\end{proof}

\begin{lma} \label{lma-comp}
Let $U$ be a connected component of $ \{ f \neq 0 \}$.  The  metric  
$\tilde g $ is complete on $U$ 
\end{lma}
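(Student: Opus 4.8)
The plan is to show $(U,\tilde g)$ is complete as a metric space, which by the Hopf--Rinow theorem is equivalent to geodesic completeness. The one essential ingredient is Lemma~\ref{lma-length-bd-1}: since the right-hand side of \eqref{eq-length-bd-1} does not depend on the connecting curve $\beta$, taking the infimum over all curves in $U$ promotes it to a lower bound on the intrinsic distance,
\[
\dist_{\tilde g}(p,q)\ \ge\ \frac1\Lambda\ln\!\Big(1+\frac{\Lambda\,\dist_g(p,q)}{\min\{|f(p)|,|f(q)|\}}\Big),\qquad p,q\in U.
\]
I would then take an arbitrary $\tilde g$-Cauchy sequence $\{p_k\}\subset U$ and prove it converges to a point of $U$.

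First I would fix a base point $p_0\in U$ and set $D=\sup_k\dist_{\tilde g}(p_0,p_k)<\infty$. Solving the displayed inequality for $\dist_g$ gives $\dist_g(p_0,p_k)\le \frac1\Lambda|f(p_0)|\,(e^{\Lambda D}-1)$, a bound independent of $k$; hence $\{p_k\}$ stays inside a fixed $g$-metric ball and, in particular, cannot run off to an asymptotically flat end. Since $(M,g)$ is complete, this closed ball is compact, so $|f|$ is bounded there by some $F<\infty$. Feeding $\min\{|f(p_k)|,|f(p_m)|\}\le F$ back into the inequality yields $\dist_g(p_k,p_m)\le \frac{F}\Lambda\big(e^{\Lambda\,\dist_{\tilde g}(p_k,p_m)}-1\big)$, whose right-hand side tends to $0$ as $k,m\to\infty$ because $\{p_k\}$ is $\tilde g$-Cauchy. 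Thus $\{p_k\}$ is $g$-Cauchy, and by completeness of $(M,g)$ it converges, $p_k\to p_\ast$ in $(M,g)$.

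It remains to check $p_\ast\in U$. As $p_\ast\in\overline U$, the only alternative is $f(p_\ast)=0$, i.e. $p_\ast\in\partial U\subset\Sigma$. If this held, then $p_\ast\neq p_0$ (since $f(p_0)\neq0$), so $\dist_g(p_0,p_k)\to\dist_g(p_0,p_\ast)>0$ while $\min\{|f(p_0)|,|f(p_k)|\}=|f(p_k)|\to0$ for large $k$; the logarithmic lower bound would then force $\dist_{\tilde g}(p_0,p_k)\to\infty$, contradicting that $\{p_k\}$ is $\tilde g$-bounded. Hence $f(p_\ast)\neq0$ and $p_\ast\in U$. Finally, on a small neighborhood of $p_\ast$ in $U$ the conformal factor $f^{-2}$ is bounded above and below, so $g$ and $\tilde g$ are bi-Lipschitz equivalent there and the $g$-convergence upgrades to $\tilde g$-convergence; thus $p_k\to p_\ast$ in $(U,\tilde g)$, proving completeness.

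The argument is essentially forced once Lemma~\ref{lma-length-bd-1} is available; the point deserving the most care is that the single logarithmic estimate must simultaneously rule out the two distinct ways a curve could leave $U$: escape to infinity and approach the zero set $\Sigma$. The second of these is exactly where the bound is sharp, as $f^{-1}$ fails to be integrable against $g$-arclength along a curve running into a simple zero of $f$. I would regard the two-sided comparison of $g$ and $\tilde g$ near $p_\ast$, together with the precompactness of $g$-balls supplied by completeness of $(M,g)$, as the routine but indispensable glue tying the two cases together.
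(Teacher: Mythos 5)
Your proof is correct, but it takes a genuinely different route from the paper's. The paper argues directly with geodesics: it takes an inextendible unit-speed $\tilde g$-geodesic $\beta:[0,T)\to U$ with $T<\infty$, uses $|\tfrac{d}{dt}f(\beta(t))|\le\Lambda f(\beta(t))$ to get $f(\beta(t))\le f(\beta(0))e^{\Lambda t}$, concludes the $g$-length $\int_0^T f(\beta(t))\,dt$ is finite, so $\beta(t)$ converges to some $q\in\overline U$ by completeness of $(M,g)$, and then splits into the two cases $q\in U$ (geodesic extends, contradiction) and $q\in\partial U$ (Lemma~\ref{lma-length-bd-1} forces infinite $\tilde g$-length, contradiction). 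You instead prove metric completeness via Cauchy sequences and invoke Hopf--Rinow, using Lemma~\ref{lma-length-bd-1} as the \emph{single} quantitative input: inverting the logarithmic bound both confines a $\tilde g$-Cauchy sequence to a compact $g$-ball and shows it is $g$-Cauchy, and the same bound rules out a limit on $\partial U$. The endgame is structurally parallel (a limit point in $M$ exists and cannot lie on the zero set), but your mechanism replaces the exponential growth estimate along a geodesic with an inversion of the distance inequality, at the cost of invoking Hopf--Rinow and a local bi-Lipschitz comparison of $g$ and $\tilde g$ near the limit point to upgrade $g$-convergence to $\tilde g$-convergence. One small point you elide: to conclude $p_\ast\in U$ from $f(p_\ast)\ne 0$ you are implicitly using that $U$, being a connected component of the open set $\{f\ne 0\}$, is closed in $\{f\ne 0\}$, so $\partial U\subset\{f=0\}$; this is standard but worth a sentence. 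Both arguments rest on the same two pillars, namely the uniform gradient bound \eqref{eq-gradf-bd} and the completeness of $(M,g)$.
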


\begin{proof}
Let $ \beta: [0, T) \rightarrow (U , \tilde{g}) $ be an inextendible geodesic ray in $(U, \tilde{g})$. 
We want to show $ T = \infty$.

Suppose $ T < \infty$. Without loss of  generality, we assume $ | \beta'(t) |_{\tilde{g}} = 1 $, $ \forall \  t \in [0, T)$. 
Replacing $f$ by $ - f$, we may also assume $ f > 0 $ on $U$. 
Then $ | \beta'(t) |_{g} = f(\beta(t))$ and 
\be \label{eq-est-fbt}
\lf| \frac{d}{dt} f (\beta (t) ) \ri| =  \lf|  \la \nabla f , \beta'(t) \ra_g  \ri|  
\le  \Lambda  f (\beta(t) ) . 
\ee
By  \eqref{eq-est-fbt}, we have 
\be
\lf| \frac{d }{d t} \ln { f (\beta (t) ) } \ri| \le \Lambda ,
\ee
which implies 
\be \label{eq-est-fbt-2}
{ f (\beta (t) ) }  \le f (\beta (0) )  e^{ \Lambda t} , \ \ \forall \ t \in [0, T). 
\ee
Therefore, 
the  length $l$ of $\beta $ in $(M, g)$ satisfies 
\be \label{eq-length-g}
l   = \int_0^T f ( \beta (t) ) d t < \infty. 
\ee
Since $(M, g)$ is complete, \eqref{eq-length-g} implies  
\bee \label{eq-limit-beta}
 \lim_{ t \rightarrow T -  } \beta (t) = q 
 \eee
for some point $ q \in \overline{U} \subset M$. 

If $ q \in U $,   the geodesic $\beta : [0 , T) \rightarrow (U, \tilde{g})$
can  be extended beyond $T$, contradicting the inextendibility of $\beta$.

If  $ q \in \p U $, then 
$ f (q) = 0 $.  
By  Lemma \ref{lma-length-bd-1}, this implies 
the length of $\beta$ in $(U, \tilde g )$ is $\infty$, 
which contradicts  the assumption $ T < \infty$. 

Therefore, we must have  $T=\infty$, which shows $\tilde g$ is complete on $U$. 
\end{proof}

A  proof of completeness of the Fermat metric under somewhat different circumstances  has been considered in \cite[Lemma 3]{Mars13}.

\begin{proof}[Proof of Theorem \ref{non-minimal}]
Without losing generality, we assume $ f > 0 $ in $U$.
Let $ E_1, \ldots, E_k $ be all  the ends of $(M, g)$. For each large $r$, let 
$S_r^{(i)}$ be the coordinate sphere $\{ | x | = r \}$  near infinity on the end $E_i$.  
Define $ S_r = \cup_{i=1}^k S_r^{(i)}$ and 
$ S_{r, U} = S_r \cap U . $ 
Since $ U$ is unbounded, $ S_{r, U} \neq \emptyset$. 

Given any  compact subset  $S$  of $M$ such that $ S \setminus \p U $ is an embedded 
surface in $U$,  define $ S_{U} = S \setminus  \partial U $. 
Since $ S$ is compact, $ S \subset \Omega_r$ for sufficiently  large $r$ where $ \Omega_r$ is the 
bounded open set enclosed by $ S_r$ in $(M,g)$. 
Now consider the  two  disjoint surfaces $ S_U$ and $S_{r, U}$ in $ (U, \tilde{g})$.
Let $ \dist_{\tilde g} (\cdot, \cdot)  $ denote the distance functional  on $(U, \tilde g)$. 
We claim   
$ \dist_{\tilde g} (S_U, S_{r, U} ) > 0 $
and    there exists $ p  \in S_U$ and $q  \in S_{r, U}$ 
such that $ \dist_{\tilde g} (p , q ) = \dist_{\tilde g} (S_U, S_{r, U})  $. 

To prove the above claim, suppose $ \{ p_k \}  \subset S_U$, $ \{ q_k \} \subset S_{r,U}$ are 
 sequences of points such that 
\be \label{eq-limit-distance}
 \lim_{k \rightarrow \infty} \dist_{\tilde g} (p_k , q_k) = \dist_{\tilde g}(S_U, S_{r, U} ) . 
 \ee
By Lemma \ref{lma-length-bd-1}, we have
\be \label{eq-length-bd-2}
\dist_{\tilde g} (p_k, q_k) \ge  
 \frac{1}{ \Lambda } \ln \lf(   1 + \frac{  \Lambda  \dist_g ( S, S_r)      }{ \min \{  |f (p_k) |, | f (q_k) | \}     } \ri) ,
\ee
where $ \dist_g (S, S_r ) > 0 $ is  the distance between $S$ and $S_r$ in $(M, g)$. 
Since $ S $ and $ S_r$ are compact, there exists $p  \in S $ and $q \in S_{r}$ such that,
passing to a subsequence,  
$ \lim_{k \rightarrow  \infty} p_k = p $ {and}  
$\lim_{k \rightarrow \infty} q_k =  q $.
If either $ f( p) = 0 $ or $  f(q) = 0 $, then \eqref{eq-length-bd-2} 
implies 
$
\lim_{k \rightarrow \infty } \dist_{\tilde g} (p_k , q_k)  = \infty, 
$
 contradicting \eqref{eq-limit-distance} and the fact $ \dist_{\tilde g}(S_U, S_{r, U} ) < \infty$.
Therefore, we must have $ p \in S_U$ and $ q \in S_{r, U}$. 
Hence,
\be \dist_{\tilde g} (p, q) = \dist_{\tilde g}(S_U, S_{r, U}) .
\ee
Since $ S_U \cap S_{r, U}  =  \emptyset $, we also  have     $ \dist_{\tilde g}(S_U, S_{r, U}) > 0 $. 

To proceed, we apply Lemma \ref{lma-comp} to conclude that there exists a unit speed $\tilde g$-geodesic  
$ \beta: [0, L] \rightarrow (U, \tilde g)$ such that 
\be \label{eq-beta-m}
\beta (0) = p, \ \beta(L) = q   \ \mathrm{and} \ L = \dist_{\tilde g} (S_U, S_{r, U}) . 
\ee
Since $\beta$ minimizes the $\tilde g$-distance between points on $S_U$ and $S_{r,U}$, 
there are no $\tilde g$-cut points to $S_U$ along $\beta$, except possibly at the end point $q = \beta (L)$. 
Moreover,  by the fact  $ S_U  \subset \Omega_r $, 
we  have 
$ \beta([0, L) ) \subset \Omega_r $.
Hence, $ \tilde \mu : = \beta'(L)$ is $\tilde g$-normal to $S_r$   and is outward pointing (with respect to $\Omega_r$).  
As a result, $ \mu : = \frac{1}{f(q)}  \tilde \mu$ is the outward unit normal vector to $ S_r$ at $q$ in $(M, g)$. 
Therefore,  by the fact $ (M, g)$ is asymptotically flat, we have 
\be \label{eq-positive-H}
H (S_r, q) > 0 
\ee
for large $r$, where $ H(S_r, q)$ is the mean curvature of $S_r$ with respect to $ \mu$  in $(M, g)$. 
Our sign convention on the mean curvature is that a Euclidean ball has positive mean curvature 
with respect to its outward normal.)

Next, suppose $ S \setminus \p U $ is a minimal surface in $(M, g)$.
We will derive a contradiction to \eqref{eq-positive-H} using Equations \eqref{ein3}, \eqref{ein4} and the maximum principle. 
To illustrate the main idea used,  we first consider the case that  $q = \beta (L)$ is not a $\tilde g$-cut point to $S_U$ along $\beta$. 
In this case, on the surface $S_U$, there exists a small open neighborhood $W$ of $p$  such that  the map 
$$  \Phi (t, x) : =  \widetilde  \exp_x ( t \tilde \nu )  ,$$ 
where $\widetilde \exp_{ ( \cdot)  ( \cdot) } $ is the $\tilde g$-exponential map,
$ t \in [0, L] $, $ x \in W$ and  $ \tilde \nu $ is a $\tilde g$-unit  vector field normal to $W$ with  $ \tilde \nu (p) = \beta'(0) $, 
is a diffeomorphism from $ [0, L] \times W $ onto its image in $U$. 
For each $ t \in [0, L]$, let $ W_t =  \Phi (t, W)$ and 
 let $H = H(t)$ be the mean curvature of $W_t$ with respect to $ \nu (t) = f^{-1} \Phi_* ( \frac{\p }{\p t} )$ in $(M, g)$. 
In what follows, we perform all the computations with respect to the original  metric  $g$.  
 (The metric  $\tilde g$  was only used   to produce the variation $t  \mapsto W_t$ in $U$.)

We now obtain a monotonicity formula involving $H = H(t)$ (along the lines of that  considered in \cite{Gal84}; see also \cite{Mars13, Lee-Neves13}).
By a standard computation, we have 
\beq\label{eq-Hder}
\frac{\d H}{\d t} = -\L_{t} f - (\ric( \nu , \nu ) +  |\Pi|^2)f \,,
\eeq
where $\L_t$ is the Laplacian on $W_t$ and $\Pi = \Pi (t)$ is the second fundamental form of $W_t$. 
The Laplacians $\L$ on $M$ and $\L_t$ on $W_t$ are related by
\beq
\L f = \L_t f + \D^2 f(\nu,\nu) + \frac{H}{f} \frac{\d f}{\d t}  \,.
\eeq
Substituting  (\ref{ein3}) and (\ref{ein4}) into the above gives
\beq \label{eq-plug}
\L_t f = \lf[ \rho + \gamma(\nu, \nu) - \ric(\nu,\nu) \ri] f - \frac{H}{f} \frac{\d f}{\d t} \, .
\eeq
It follows from \eqref{eq-Hder} and \eqref{eq-plug} that 
\beq
\frac{\d H}{\d t} =  - \lf[ |\Pi|^2 + \gamma (\nu, \nu) + \rho \ri]  f + \frac{H}{f} \frac{\d f}{\d t} \,
\eeq
or equivalently 
\beq 
\frac{\d}{\d t} \lf( \frac{H}{f} \ri) = -  \lf[ |\Pi|^2 + \gamma (\nu, \nu) + \rho \ri]   .
\eeq
Hence by the NEC  \eqref{eq-NE}, we have 
\be \label{eq-mono}
\frac{\d}{\d t} \lf( \frac{H}{f}  \ri) \le 0 \, . 
\ee
 By \eqref{eq-mono}, if $W = W_0 $ has mean curvature $H (0) \le 0$ with respect to $\nu$, then 
 $ W_L$ has mean curvature $H_L \le 0 $.   
 On the other hand,  by the minimizing property of $\beta$, i.e. \eqref{eq-beta-m}, 
 $W_{L} \subset \bar{\Omega}_r $ and $W_L$  touches $S_r$ at $q$.  
Therefore, the inequality $ H_L \le 0 $ contradicts \eqref{eq-positive-H} and the maximum principle. 
Hence, $ S_U$ can not be a minimal surface in $(M, g)$. 

To complete the proof, we need to handle the case $ q = \beta (L)$ is a $\tilde g$-cut point to $S_U$ along $\beta$.
We will reduce this case to the case just considered above by the following procedure. 
Let $ \tilde \eta$ denote the inward, unit normal to $S_{r, U}$ in $(U, g)$. In particular, $ \tilde \eta (q) = - \beta'(L) $. 
Let $D \subset S_{r, U} $ be a small open neighborhood 
 of $q$. There exists a small $ \epsilon > 0$ such that the map
 \be
 \Psi ( t, y) : =  \tilde \exp_y ( t \tilde  \eta ) ,
 \ee
 where $ t \in [0, \epsilon ]$ and $ y \in D$, 
 is a diffeomorphism from $ [0, \epsilon] \times D$ onto its image in $U$. 
 Define $D_\epsilon = \Psi (\epsilon , D)$. 
 Clearly, $ \hat q = \beta (L - \epsilon ) \in D_\epsilon $. Moreover, 
 when restricted to $[0 , L - \epsilon] $, $\beta$  minimizes distance between $D_\epsilon $ and $ S_U$ in $(U, g)$. 
Using \eqref{eq-positive-H},  by choosing $ \epsilon $ sufficiently small, we  further have 
\be
H(D_\epsilon, \hat q) > 0 , 
\ee
where $H(D_\epsilon, \hat q) $ is the mean curvature of $D_\epsilon $ at $ \hat q$ with respect to the outward normal in $(M, g)$. 
Since $ \hat q = \beta (L- \epsilon)$ is not a $\tilde g$-cut point to $S_U$ along $\beta$, 
therefore repeating the previous argument with $q$ replaced by $ \hat q$ and $S_{r, U}$ replaced by $ D_\epsilon$, we again 
obtain a contradiction to the assumption that $ S_U$ is minimal in $(M, g)$. 
This completes the proof. 
\end{proof}

\begin{remark}
Among results from 
 Section \ref{sec-p}, we have only used  Proposition \ref{prop-AF-static-f} (i) 
 to obtain  the gradient estimate \eqref{eq-gradf-bd}
 in the proof of Theorem \ref{non-minimal}.
\end{remark}

\begin{remark}
 Theorem  \ref{non-minimal} does not depend on the dimension of $M$, i.e., it holds in all dimensions $\ge 3$.
Indeed, while the proof makes use of Proposition~\ref{prop-AF-static-f}, it can be shown that this
proposition extends to higher dimensions. 
\end{remark}

\begin{remark}
Even though in Theorem \ref{non-minimal} 
we focus on asymptotically flat manifolds  which are complete without boundary, 
it is clear from its proof that the conclusion of Theorem \ref{non-minimal} 
holds for those  $(M, g)$ which have nonempty boundary $\p M$ (possibly disconnected)
 on which $f$ vanishes.  
\end{remark}

\section{Rigidity properties of static potentials}

We now establish some rigidity properties of a noncompact, connected  
component (assuming such exists) of the zero set of a static potential  $f$; cf. Theorem \ref{thm-stab}.  
 As a corollary to this, we consider circumstances which imply global rigidity.

We continue to assume $(M, g)$ is a complete,  asymptotically flat $3$-manifold without boundary, with finitely many ends.  The triple ($g$, $\gamma$, $ \rho $) is assumed to  satisfy the decay assumptions \eqref{eq-AF-def} and \eqref{eq-gamma-decay} 
on  each end.   

The following is a fundamental (and almost immediate) consequence of Theorem \ref{non-minimal}.

\begin{thm} \label{thm-minimizing}
Assume $(M, g, \gamma, \rho)$ satisfies the NEC and $f$ is a  static potential. 
If $\Sigma$ is a  noncompact, connected component of 
$ f^{-1}(0)$, then $\Sigma$   is   a strictly area minimizing surface in $(M, g)$. 
\end{thm}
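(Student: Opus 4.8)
The plan is to combine the totally geodesic property of $\Sigma$ supplied by Lemma \ref{lma-static-basic} with a standard area-comparison argument, in which Theorem \ref{non-minimal} plays the decisive role of forbidding any competitor surface from dipping into $\{f\neq 0\}$. First I would record the structural facts. By Lemma \ref{lma-static-basic}(i), $\Sigma$ is totally geodesic, hence a minimal surface; it is two-sided because $\nabla f$, being nonzero along $\Sigma$, furnishes a global unit normal and a coorientation. Since $\Sigma$ is a noncompact component of $f^{-1}(0)$, the potential $f$ is unbounded, so by Proposition \ref{prop-AF-static-zeroset}, outside a compact set $\Sigma$ is a graph asymptotic to a plane. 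In particular $\Sigma$ locally separates a neighborhood of itself into a side where $f>0$ and a side where $f<0$; let $U_+$ and $U_-$ denote the components of $\{f\neq 0\}$ adjacent to $\Sigma$ from each side. Because $\Sigma$ is noncompact with planar asymptotics, each of $U_+,U_-$ is unbounded.

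To prove area minimization I would fix an arbitrary compact perturbation of $\Sigma$ and enclose its support in the bounded region $\Omega_r$ cut off by a large coordinate sphere $S_r$, so that $\partial(\Sigma\cap\Omega_r)=\Sigma\cap S_r\subset\Sigma$. Minimizing area over integral currents in $\overline{\Omega_r}$ carrying this fixed boundary, in the homology class of $\Sigma\cap\Omega_r$ relative to its boundary, produces an area minimizer $T$. By interior regularity for codimension-one area minimizers in dimension $3$, $T$ is a smooth embedded minimal surface with $\partial T=\Sigma\cap S_r$ and $\Area(T)\le\Area(\Sigma\cap\Omega_r)$.

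The crux is to show $T=\Sigma\cap\Omega_r$. Since $T$ and $\Sigma\cap\Omega_r$ share the same boundary and are homologous, their difference bounds a region $W\subset\Omega_r$ whose interior lies in $\{f\neq 0\}$ and, being adjacent to $\Sigma$, in $U_+\cup U_-$. If $W$ had nonempty interior, then the closure of the portion of $T$ lying in one of the unbounded components $U_\pm$ would be a compact set $S$ with $S\setminus\partial U_\pm$ an embedded minimal surface in $U_\pm$; its boundary sits on $(\Sigma\cap S_r)\cup(T\cap\Sigma)\subset\partial U_\pm$, so this is precisely the configuration forbidden by Theorem \ref{non-minimal} (applied to the unbounded component $U_\pm$). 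This contradiction forces $W$ to have empty interior, hence $T=\Sigma\cap\Omega_r$. Consequently $\Sigma\cap\Omega_r$ is the unique area minimizer for its boundary data, so any competitor satisfies $\Area(\Sigma\cap\Omega_r)\le\Area(\Sigma'\cap\Omega_r)$ with equality only when $\Sigma'\cap\Omega_r=\Sigma\cap\Omega_r$. As $r$ may be taken arbitrarily large, $\Sigma$ is strictly area minimizing in $(M,g)$.

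I expect the main obstacle to be the geometric-measure-theoretic bookkeeping in the crux step: setting up the minimization with the correct boundary and homology class, invoking enough interior (and, near $\Sigma\cap S_r$, boundary) regularity so that $T$ is a genuine embedded surface, and — most delicately — verifying that the penetrating piece of $T$ is a \emph{compact} embedded surface whose boundary lies \emph{entirely} on the zero set $\partial U_\pm$, so that Theorem \ref{non-minimal} applies verbatim. The preliminary claim that the adjacent components $U_\pm$ are unbounded, which relies on the planar asymptotics of $\Sigma$ from Proposition \ref{prop-AF-static-zeroset}, also warrants care.
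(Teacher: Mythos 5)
Your proposal is correct in its essentials and follows the same route as the paper: truncate $\Sigma$ by large coordinate spheres, produce an area minimizer spanning the resulting boundary curve, and invoke Theorem \ref{non-minimal} (applied to the unbounded components $U_\pm$ of $\{f\neq 0\}$ adjacent to $\Sigma$) to forbid the minimizer from leaving $\Sigma$. The paper implements this with the classical Plateau problem (Almgren--Simon, Meeks--Yau) for an exhaustion $D_k$ of $\Sigma$ with $\partial D_k=\Gamma_k$, minimizing among \emph{all} compact embedded surfaces with boundary $\Gamma_k$ (the mean convex spheres near infinity supply the barrier), whereas you minimize integral currents in a fixed relative homology class inside $\overline{\Omega}_r$. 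Two caveats on your version. First, the homology-class competition is strictly smaller than the paper's, so as written you would only obtain homological minimization; to get the paper's conclusion (that $D_k$ strictly minimizes among all surfaces with the same boundary) you should minimize without the homological constraint, exactly as the Plateau problem does. Second, your assertion that the region between $T$ and $\Sigma\cap\Omega_r$ has interior in $U_+\cup U_-$ is not justified: a priori $T$ could deviate from $\Sigma$ into other components of $\{f\neq 0\}$ or along other components of $f^{-1}(0)$. The correct bridge from ``$T$ avoids $U_+\cup U_-$'' to ``$T=\Sigma\cap\Omega_r$'' is that near its boundary $\Gamma\subset\Sigma$ the surface $T$ must lie in the tubular neighborhood $W=W_+\cup\Sigma\cup W_-$ with $W_\pm\subset U_\pm$, hence in $\Sigma$, and then the strong maximum principle / unique continuation for minimal surfaces (with $\Sigma$ totally geodesic by Lemma \ref{lma-static-basic}(i)) propagates the coincidence set, which is open and closed in each component of $T$ meeting $\Gamma$. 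The paper states this step tersely (``Therefore $S_k\subset\Sigma$''), but your substitute justification via the region $W$ does not work as stated.
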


\begin{proof}
By Lemma \ref{lma-static-basic} (i), $ \nabla f$ 
is a nowhere vanishing normal to $\Sigma$. 
Hence, there exists a connected open neighborhood $W$ of $\Sigma$ such that
$ W \setminus  \Sigma$ is the disjoint union of two connected, unbounded, open sets $W_+$ and $W_-$ satisfying 
$ W_+ \subset \{ f > 0 \}$ and $ W_- \subset \{ f < 0 \}$. 
Let $ U_+$ and  $U_-$  be  the connected component  of $\{ f > 0 \}$ and $\{ f < 0 \}$  that 
contains $W_+$ and  $ W_-$ respectively. 
As $ W_+$, $ W_-$ are unbounded, so are $U_+$ and $U_-$. 

To show that $ \Sigma$ is strictly area minimizing, we make use of solutions to the Plateau problem in $(M, g)$ (cf. \cite{Almgren-Simon-79, Meeks-Yau-82}).
Let $ \{ D_k \}_{k=1}^\infty $ be an exhaustion sequence of $\Sigma$ such that each $ D_k$ is a bounded domain in $\Sigma$ with smooth boundary  $\Gamma_k$. (For instance,  $D_k$ may be taken as the subset of $\Sigma$ enclosed by the union of  large coordinates spheres  near infinity on each end of $(M, g)$ in which $\Sigma$ extends to the infinity. 
By  Prop. \ref{prop-AF-static-zeroset},  each such sphere intersects $\Sigma$ transversely.)
 Since $(M, g)$ is asymptotically flat 
 and hence foliated by mean convex spheres near infinity at each end, 
there exists a compact, embedded, minimal surface $S_k $ in $(M, g)$ such that $ \p S _k= \Gamma_k$ and 
$ S_k$ minimizes area among all compact surfaces having  boundary $\Gamma_k$.   
By Theorem \ref{non-minimal}, $ S_k \cap U_+ = \emptyset = S_k \cap U_- $.  Therefore, 
$ S_k \subset \Sigma$ and  hence $S_k= D_k $. This shows that  $D_k$ strictly minimizes area among all surfaces with the same boundary.
Since $ \{ D_k \}$ exhausts $\Sigma$, we conclude that $ \Sigma$ is strictly area minimizing.  
\end{proof}

So far we have only assumed the null energy condition. In what follows, we impose the stronger energy condition, 
\be\label{eq-EC}
\rho \ge | \gamma (X, X)| \ \ \mathrm{for \ all \ unit \ vectors} \ X .
\ee
Note that if $f$ is a  static potential, then on the region $\{f \ne 0 \}$ (whose closure is all of $M$)  
this energy inequality is a consequence of the spacetime dominant energy condition as applied to the static metric \eqref{eq-static}.  Moreover, inequality \eqref{eq-EC} implies
$ R \ge 0 $,  where $R$ is the scalar curvature of $(M, g)$.  Indeed, the trace of \eqref{ein3} and \eqref{ein4} imply that 
\beq\label{eq-rho}
\rho = \frac12 R 
\eeq
on the set $\{f \ne 0 \}$, and hence, by continuity, on all of $M$.

\begin{thm} \label{thm-stab}
Assume $(M, g, \gamma, \rho)$ satisfies the energy condition \eqref{eq-EC} and $f$ is a  static potential. 
Suppose  $\Sigma$ is a  noncompact, connected component of 
$ f^{-1}(0)$.  Then $\Sigma$   is   a  strictly area minimizing, totally geodesic surface 
that is isometric to the Euclidean plane $ \R^2$.
Moreover,  $(M, g)$ is flat  along $ \Sigma$ and $ \rho =0 $, $\gamma = 0 $ along $ \Sigma$.  
\end{thm}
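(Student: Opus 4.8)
Several ingredients are already in hand. By Lemma \ref{lma-static-basic}(i) the surface $\Sigma$ is totally geodesic and $\nabla f$ is a nowhere-vanishing normal, so $\Sigma$ is two-sided; by Theorem \ref{thm-minimizing} it is strictly area minimizing, and hence \emph{stable}; and since the energy condition \eqref{eq-EC} forces $\rho \geq 0$, the identity \eqref{eq-rho} gives $R = 2\rho \geq 0$ on all of $M$. This already yields the ``strictly area minimizing, totally geodesic'' part of the statement, so the remaining task is the flatness and rigidity. The plan is to extract these from the stability inequality together with the Gauss equation, the Gauss--Bonnet theorem fed through the asymptotics of Proposition \ref{prop-AF-static-zeroset}, and the Fischer--Colbrie and Schoen theory of complete stable minimal surfaces in ambients of nonnegative scalar curvature.

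First I would record the stability inequality $\int_\Sigma |\nabla_\Sigma \phi|^2 \geq \int_\Sigma \big(\,|A|^2 + \Ric(\nu,\nu)\,\big)\phi^2$ for all $\phi \in C_c^\infty(\Sigma)$, where $\nu = \nabla f/|\nabla f|$. Since $\Sigma$ is totally geodesic, $|A|^2 = 0$, and the Gauss equation in dimension three gives $\Ric(\nu,\nu) = \tfrac12 R - K$, with $K$ the Gaussian curvature of $\Sigma$. Hence stability reads
\[
\int_\Sigma |\nabla_\Sigma \phi|^2 + K\,\phi^2 \;\geq\; \int_\Sigma \tfrac12 R\, \phi^2 \;\geq\; 0 .
\]
By Proposition \ref{prop-AF-static-zeroset}, $\Sigma$ is asymptotically planar with a single end and quadratic area growth, so it is parabolic: there exist logarithmic cutoffs $\phi_j \to 1$ with $\int_\Sigma |\nabla_\Sigma \phi_j|^2 \to 0$. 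The decay assumptions \eqref{eq-AF-def}--\eqref{eq-gamma-decay} make both $R$ and $K$ integrable on $\Sigma$, so letting $j \to \infty$ gives $\int_\Sigma K \geq \int_\Sigma \tfrac12 R \geq 0$. On the other hand, Gauss--Bonnet on an exhaustion $\{D_R\}$ together with the boundary-curvature limit \eqref{eq-g-curvature} yields $\int_\Sigma K = 2\pi\,\chi(\Sigma) - 2\pi = -4\pi\,\mathfrak{g}\leq 0$, where $\mathfrak{g}$ is the genus of the single-ended, orientable surface $\Sigma$. Chaining the inequalities forces $\int_\Sigma R = 0$, $\mathfrak{g}=0$, and $\int_\Sigma K = 0$; thus $R \equiv 0$ on $\Sigma$ (nonnegative, continuous, with vanishing integral) and $\Sigma$ is diffeomorphic to $\R^2$.

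The crux is to upgrade $\int_\Sigma K = 0$ to the pointwise statement $K \equiv 0$. Here I would invoke Fischer--Colbrie and Schoen: since the stability operator $-\Delta_\Sigma + K$ is nonnegative on the complete surface $\Sigma$, there is a positive solution $u>0$ of $-\Delta_\Sigma u + K u = 0$. Writing $v = \log u$, the equation becomes $K = \Delta_\Sigma v + |\nabla_\Sigma v|^2$, and integration by parts gives the identity $\int_\Sigma |\nabla_\Sigma \phi|^2 + K\phi^2 = \int_\Sigma |\phi\,\nabla_\Sigma v - \nabla_\Sigma \phi|^2$ for $\phi \in C_c^\infty(\Sigma)$. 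Feeding in the cutoffs $\phi_j \to 1$, the left side tends to $\int_\Sigma K = 0$, so the right side tends to $0$; since $\|\nabla_\Sigma\phi_j\|_{L^2}\to 0$, the triangle inequality and Fatou's lemma then force $\nabla_\Sigma v \equiv 0$, whence $K \equiv 0$ and $\Sigma$ is flat. As a complete flat surface diffeomorphic to $\R^2$, it is isometric to the Euclidean plane. I expect this passage from integral to pointwise vanishing---juggling completeness, integrability of $K$, and the parabolic cutoff estimate at once---to be the main technical obstacle.

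Finally I would read off the ambient rigidity. From $R \equiv 0$ on $\Sigma$ and $\rho = \tfrac12 R$ we get $\rho = 0$ along $\Sigma$, and then the energy condition \eqref{eq-EC}, $\rho \geq |\gamma(X,X)|$, forces $\gamma(X,X)=0$ for every unit $X$ and hence, by polarization, $\gamma = 0$ along $\Sigma$. With $\rho = 0$, $\gamma = 0$, and $K = 0$ along $\Sigma$, Lemma \ref{lma-static-basic}(iii) gives $\Ric|_{T\Sigma} = \tfrac12 K\, g = 0$ and $\Ric(\nu,\nu) = -K = 0$, while Lemma \ref{lma-static-basic}(ii) gives $\Ric(\nu,X)=0$ for $X$ tangent to $\Sigma$; thus $\Ric \equiv 0$ at every point of $\Sigma$. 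Since in dimension three the full curvature tensor is algebraically determined by the Ricci tensor, $(M,g)$ is flat along $\Sigma$, which completes the proof.
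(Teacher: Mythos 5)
Your proposal is correct and follows essentially the same route as the paper's proof: stability from strict area minimization, the Gauss equation, a logarithmic cutoff against quadratic area growth, Gauss--Bonnet via \eqref{eq-g-curvature}, the Fischer--Colbrie--Schoen positive Jacobi function to upgrade $\int_\Sigma K=0$ to $K\equiv 0$ (your completed-square identity is an equivalent packaging of the paper's Cauchy--Schwarz estimate), and finally Lemma \ref{lma-static-basic}(ii)--(iii) plus the three-dimensional Ricci-determines-curvature fact. The one imprecision is your assertion that $\Sigma$ has a single end ``by Proposition \ref{prop-AF-static-zeroset}'': a priori $\Sigma$ could extend to infinity in $k\ge 1$ ends of $M$, and the correct Gauss--Bonnet statement is $\int_\Sigma K = 2\pi(\chi(\Sigma)-k)\le 0$, with $k=1$ emerging only afterwards from the equality case; since the sign is unaffected, this does not damage the argument.
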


\begin{proof}
Since \eqref{eq-EC} implies NEC, $\Sigma$ is strictly area minimizing by Theorem~\ref{thm-minimizing}. 
In particular, $ \Sigma$ is stable, i.e.
\be \label{eq-stable-1}
\int_\Sigma  \lf[  | \nabla_\Sigma \eta |^2 - ( \Ric (\nu, \nu) + | \Pi |^2 ) \eta^2  \ri] d \sigma \ge 0 .
\ee
Here  $ \eta$ is  any Lipschitz function on $ \Sigma$ with compact support, 
 $ \nabla_\Sigma$ and $ d \sigma$ denote the gradient and  the area form on $\Sigma$ respectively,  
$ \nu$ is a unit normal along $\Sigma$, and $ \Pi$ is the second fundamental form of $\Sigma$. 
 By Lemma \ref{lma-static-basic} (i), $ \Pi = 0 $.
 It follows from  the Gauss equation that 
\be \label{eq-Gauss}
2 K = R - 2 \Ric (\nu, \nu) , 
\ee 
where $K$ is the Gaussian curvature of $\Sigma$.
Hence,  \eqref{eq-stable-1}  becomes  
\be \label{eq-stable-2}
\int_\Sigma  \lf[  | \nabla_\Sigma \eta |^2 -  \lf( \frac12 R - K \ri) \eta^2  \ri] d \sigma \ge 0 .
\ee

Now  let $ E_1, \ldots, E_k $ be those ends of $(M, g)$ such that $ \Sigma $ extends to the infinity of $E_i$,  $ 1 \le i \le k$.
By Proposition \ref{prop-AF-static-zeroset},  near the infinity of each $E_i$,   $\Sigma = f^{-1}(0)$ which is 
 the graph of some function $ q = q(y_2, y_3)$ satisfying \eqref{eq-condition-q} in a coordinate  chart $\{ y_1, y_2, y_3 \}$. 
For each large $ r $, let $ \gamma_r^{(i)} \subset \Sigma $ be the curve that  is the graph of $q$ over the circle 
$\{ y_2^2 + y_3^2  = r^2 \}$ in the $y_2 y_3$-plane.  
Let $ \Gamma_r = \cup_{i=1}^k \gamma_r^{(i)}$ and $ D_r $ be the bounded region in $ \Sigma $  enclosed by 
$ \Gamma_r$. 
We claim
\be \label{eq-area-growth}
\mathrm{Area} (D_r) \le C_1 r^2 
\ee
for some constant $ C_1 > 0$. 
To see this, 
one can   consider the map 
$  F: \Omega_C \rightarrow \Sigma $  given   by $ F  (y_2, y_3) = ( q (y_2, y_3) , y_2, y_3 ) $, 
where  $ \Omega_C$ is  the exterior  region
 in the $y_2 y_3$-plane defined   in Proposition  \ref{prop-AF-static-zeroset}. 
Let  $ \sigma = F^* ( g) $ be  the pulled back  metric  from $ \Sigma$ to $ \Omega_C$
and  let 
$ \sigma_{\alpha \beta} = \sigma (\p_{y_\alpha}, \p_{y_\beta} ) $ where 
 $ \alpha, \beta \in \{ 2, 3 \}$.
It follows from \eqref{eq-AF-def} and  \eqref{eq-condition-q}
that
\be \label{eq-decay-sigma}
\sigma_{\alpha \beta} = \delta_{\alpha \beta} + h_{\alpha \beta}, \ 
\ee
where 
$ h_{\alpha \beta} $ satisfies
\be \label{eq-decay-tau}
| h_{\alpha \beta} | + | \bar{y} | | \p  h_{\alpha \beta} | = O ( | \bar{y} |^{-\tau} )
\ee
with $ | \bar{y} | = \sqrt{ y_2^2 + y_3^2} $.  It is readily seen that 
 \eqref{eq-decay-sigma} and \eqref{eq-decay-tau} imply  \eqref{eq-area-growth}.    
 
Next,  we apply a logarithmic cut-off argument using \eqref{eq-stable-2} and \eqref{eq-area-growth}.  
Given any large integer $m$, define  $ \eta_m  $ on $ \Sigma$ 
by 
\be \label{eq-def-etam}
 \eta_m (p) = 
\left\{ 
\begin{array}{cc}
1,  & \ p \in  D_{e^m} \\
2 - \frac{\log | \bar{y} (p)| }{m}, &  e^m \le | \bar{y} (p) |   \le e^{2m} \\
0 & \ p \notin D_{2 e^m}  .
\end{array}
\right.
\ee 
Plugging  $ \eta = \eta_m$ in \eqref{eq-stable-2} gives 
\be \label{eq-log-cut}
\begin{split}
 \int_\Sigma \lf( \frac12 R - K \ri) \eta_m^2  d \sigma 
\le & \ \frac{1}{m^2} \int_{ D_{e^{2m} } \setminus D_{e^m}  }  \frac{ | \nabla_\Sigma | \bar{y} |  |^2 }{| \bar{y} |^2} d \sigma \\
\le  & \  \frac{1}{m^2}   \sum_{l=m+1}^{2m} \int_{ D_{e^{l} } (p) \setminus D_{e^{l-1} } (p) }  \frac{C_2}{| \bar{y} |^2 } d \sigma  \\
\le & \ \frac{1}{m^2} C_1  C_2 \sum_{l=m+1}^{2m}  e^{ - 2(l-1)}   e^{2l} =  \frac{1}{m} C_1 C_2  e^2 ,
\end{split}
\ee
where we have used \eqref{eq-area-growth} and the fact 
$ | \nabla_\Sigma | \bar{y} | |  \le | \nabla | \bar{y} | | \le C_2$
for some constant $ C_2 > 0 $ when $ \bar{y}$ is large. 
To proceed, we note that
\be
R = O ( | y |^{-\tau- 2} ) = O( | \bar{y} |^{-\tau - 2}) 
\ee
by \eqref{eq-AF-def}, and 
\be
K = O ( | y |^{-\tau- 2} ) = O( | \bar{y} |^{-\tau - 2}) 
\ee
by \eqref{eq-AF-def} and \eqref{eq-Gauss}. These together with  \eqref{eq-decay-sigma} and  \eqref{eq-decay-tau}
imply
\be
\int_\Sigma | R | d \sigma < \infty \  \ \mathrm{and} \  \ \int_\Sigma | K | d \sigma < \infty . 
\ee
Thus,  letting $m \rightarrow \infty$ in \eqref{eq-log-cut}, we have  
\be \label{eq-int-K-ge}
\int_\Sigma K d \sigma  \ge  \frac12 \int_\Sigma R d \sigma  . 
\ee
On the other hand,  by   the Gauss-Bonnet theorem and  \eqref{eq-g-curvature} in  Proposition \ref{prop-AF-static-zeroset}, 
\be \label{eq-total-K-alpha}
\begin{split}
\int_{\Sigma} K d \sigma = & \ \lim_{r \rightarrow \infty}  \lf(  2\pi \chi (D_r)  -  \sum_{i =1}^k   \int_{\gamma^{(i)}_{r} } \kappa \ d s \ri) \\
= & \    2\pi  ( \chi (\Sigma )  - k )    \\
\le & \ 0 
\end{split}
\ee
since  $k \ge 1 $ and   the  Euler characteristic $\chi (\cdot )$ of any connected,  noncompact  surface is at most $1$. 
(So far we have not imposed the energy condition \eqref{eq-EC}; thus \eqref{eq-int-K-ge} and  \eqref{eq-total-K-alpha} hold  without a sign assumption on $ R$.) 
Now applying the  fact $ R \ge 0 $ (which is by \eqref{eq-EC} and \eqref{eq-rho}),
we  conclude from \eqref{eq-int-K-ge} and   \eqref{eq-total-K-alpha}  that 
\be
\int_\Sigma K d \sigma = 0, \  k = 1, \ \chi (\Sigma) = 1, \ \mathrm{and} \ R = 0 \ \mathrm{along} \ \Sigma .  
\ee

Finally, we want to show  $ K = 0 $ along $ \Sigma$. 
We follow  an argument of  Fischer-Colbrie  and Schoen in \cite{FCS}.  
Note that $ R = 0 $ implies 
$ K = - \Ric (\nu, \nu) $. 
Hence, the stability operator on $\Sigma$ becomes 
$ L  = \Delta_\Sigma  - K  $,
where $ \Delta_\Sigma$ is the Laplacian on $\Sigma$. 
Since $ \Sigma $ is complete,
by \cite[Theorem 1]{FCS}, there exists a positive function $ v $ on $\Sigma$ 
satisfying 
\be \label{eq-def-v}
\Delta_\Sigma v - K v = 0 . 
\ee
Define $ w = \log {v} $, then
\be \label{eq-w}
\Delta_\Sigma w + | \nabla_\Sigma w |^2 = K . 
\ee
Let $ \eta_m $ be given as in \eqref{eq-def-etam}.
Multiply \eqref{eq-w} by $\eta_m^2$ and integrate by parts,  
\be
\begin{split}
\int_\Sigma K \eta_m^2 d \sigma = & \ 
- 2 \int_\Sigma  \eta_m \la \nabla_\Sigma \eta_m , \nabla_\Sigma w \ra d \sigma 
+ \int_\Sigma \eta_m^2 | \nabla_\Sigma w|^2  d \sigma  \\
\ge & \ - \int_\Sigma  \lf( \frac12  \eta_m^2 | \nabla_\Sigma w|^2  + 2 | \nabla_\Sigma \eta_m |^2   \ri)  d \sigma  
+ \int_\Sigma \eta_m^2 | \nabla_\Sigma w|^2  d \sigma  \\
=  & \ - 2 \int_\Sigma  | \nabla_\Sigma \eta_m |^2     d \sigma  
+ \frac12 \int_\Sigma \eta_m^2 | \nabla_\Sigma w|^2  d \sigma  .
\end{split}
\ee
Let $ m \rightarrow \infty$ and  use  the fact $ \int_\Sigma | \nabla_\Sigma \eta_m |^2 d \sigma = O ( m^{-1} )$
(cf. \eqref{eq-log-cut}), we have 
\be
\int_\Sigma K d \sigma \ge \frac12 \int_\Sigma | \nabla_\Sigma w |^2 d \sigma .
\ee
Since  $ \int_\Sigma K d \sigma = 0 $,  this  proves   $  \nabla_\Sigma w = 0 $ on $ \Sigma$, hence 
$ v $ is a positive constant. From \eqref{eq-def-v}, we conclude 
$K = 0 $ along $ \Sigma$.  
As a result,    $ \Sigma$ is isometrically covered by  $ \R^2$.
However, $ \Sigma$ cannot be a cylinder since $ \chi (\Sigma) = 1$. Hence, 
$\Sigma$ is isometric to $ \R^2$. 

To complete the proof, 
we note that  $ R = 0 $ along $ \Sigma$ implies $ \rho = 0 $ and $ \gamma = 0 $ along $\Sigma$ by  \eqref{eq-EC} and \eqref{eq-rho}. 
Therefore,  by Lemma \ref{lma-static-basic} (iii)  and \eqref{eq-Ric-cross}, $ (M,g)$ is flat along $ \Sigma$. 
\end{proof} 

As an application of Theorem \ref{thm-stab},  we have

\begin{cor}\label{cor-f-zerofree}
Assume $(M, g, \gamma, \rho)$ satisfies the energy condition \eqref{eq-EC} and $f$ is a  static potential. 
Suppose that, outside a compact set,  either $\rho > 0 $ or  $ (M, g)$ is non-flat at each point.
Then  $f$ must be bounded on $M$  and hence $ f > 0 $ or $ f < 0 $ 
near infinity on each end of $(M, g)$. 
\end{cor}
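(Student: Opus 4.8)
The plan is to argue by contradiction: if $f$ were unbounded, then its zero set would contain a noncompact component, and Theorem \ref{thm-stab} would force the ambient geometry to be flat and source-free along that surface all the way out to infinity, contradicting the hypothesis. The sign statement then follows from the asymptotics in Proposition \ref{prop-AF-static-f}.

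First I would suppose $f$ is unbounded on $M$. On each end, Proposition \ref{prop-AF-static-f}(i) writes $f = a_1 x_1 + a_2 x_2 + a_3 x_3 + h$ with $h = o(|x|)$ (since $|h| = O(|x|^{1-\tau})$ or $O(\ln|x|)$ and $\tau \in (\frac12,1]$). If the linear part vanished at every end, $f$ would tend to a finite limit at each end and hence be bounded on $M$; so unboundedness forces $(a_1,a_2,a_3) \neq 0$ at some end. Then Proposition \ref{prop-AF-static-zeroset} applies at that end: outside a compact set, $f^{-1}(0)$ is the graph of a function $q$ over the exterior planar region $\Omega_C$. This graph is connected and noncompact, so the connected component $\Sigma$ of $f^{-1}(0)$ containing it is a noncompact, connected component of the zero set.

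Next I would invoke Theorem \ref{thm-stab}, whose hypotheses hold because \eqref{eq-EC} is assumed and $\Sigma$ is a noncompact connected component of $f^{-1}(0)$. It yields that $\rho = 0$, $\gamma = 0$, and $(M,g)$ is flat \emph{along} $\Sigma$, i.e. the curvature vanishes at each point of $\Sigma$. Since $\Sigma$ is noncompact it must exit the compact set named in the hypothesis; picking any point $p \in \Sigma$ outside that set, the hypothesis requires either $\rho(p) > 0$ or $(M,g)$ non-flat at $p$, each of which contradicts the conclusion of Theorem \ref{thm-stab} at $p$. (This works under either reading of the hypothesis, since in either case $p$ inherits one of the two excluded properties.) Hence $f$ must be bounded.

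To finish, boundedness of $f$ forces the linear part to vanish at every end, so by Proposition \ref{prop-AF-static-f}(ii) $f$ tends to a finite limit $L_i$ at each end $E_i$; because $f$ is nontrivial, part (ii) also rules out $L_i = 0$, so $L_i \neq 0$. Therefore $f$ approaches a nonzero constant at each end and consequently has a fixed sign near infinity on each end, which is the asserted dichotomy $f > 0$ or $f < 0$ near infinity. I do not anticipate a genuine obstacle, as the corollary is essentially a repackaging of Theorem \ref{thm-stab}; the only points needing care are verifying that unboundedness really produces a noncompact zero-set component via Proposition \ref{prop-AF-static-zeroset}, and using ``flat along $\Sigma$'' pointwise at a single point $p \in \Sigma$ lying outside the given compact set.
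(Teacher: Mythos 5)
Your proposal is correct and follows essentially the same route as the paper: unboundedness of $f$ yields (via Propositions \ref{prop-AF-static-f} and \ref{prop-AF-static-zeroset}) a noncompact connected component of $f^{-1}(0)$, Theorem \ref{thm-stab} then forces flatness and $(\rho,\gamma)=(0,0)$ along it, contradicting the hypothesis outside the compact set, and the sign dichotomy follows from Proposition \ref{prop-AF-static-f}(ii). The only difference is that you spell out the intermediate steps (nonvanishing linear part at some end, connectedness and noncompactness of the graph over $\Omega_C$, nonvanishing of the limit) that the paper leaves implicit.
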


\begin{proof}
If $ f $ is unbounded on $M$,   
 Proposition \ref{prop-AF-static-zeroset} implies that 
$f^{-1}(0)$ must have a noncompact,  connected component $\Sigma$.
By Theorem \ref{thm-stab}, $(M, g)$ is flat and $ (\rho, \gamma) = (0, 0)$ along $ \Sigma$,
which contradicts the assumption that $(M, g)$ is non-flat or $\rho > 0 $ outside a compact set.
 Therefore, $f$ must be bounded on $M$. 
By Proposition \ref{prop-AF-static-f} (ii), $ f $ is either positive or negative near infinity on each end of $(M, g)$. 
\end{proof}


Together with the positive mass theorem (\cite{SchoenYau79, Witten81}), Corollary \ref{cor-f-zerofree}
implies the following rigidity result for asymptotically Schwarzschildean manifolds that admit an unbounded 
static potential. 

\begin{cor}\label{cor-AS}
Assume $(M, g, \gamma, \rho)$ satisfies the energy condition \eqref{eq-EC} and $f$ is a  static potential. 
Suppose $ (M, g)$ is asymptotically Schwarzschildean at each end, i.e.,
there exists a  coordinate chart $ \{ x_1, x_2, x_3 \}$   near infinity  on  each end $E_\alpha $,  
 in which the metric $g$ satisfies
\be \label{eq-AS}
g_{ij} = \lf( 1 + \frac{m_\alpha }{2 | x|} \ri)^4 \delta_{ij}+ p_{ij},
\ee
where     $ |p_{ij} | + |x| | \p p_{ij} | + |x|^2 | \p^2 p_{ij} | =O (|x|^{-2})$ 
and $ m_\alpha $ is some constant. Here $ 1 \le \alpha \le k $ and $E_1, \ldots, E_k $ denote  all the ends of $(M, g)$. 
Then, if $f$ is unbounded,  $(M, g)$ is isometric to $ \R^3$.
\end{cor}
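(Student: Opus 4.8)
The plan is to combine Corollary~\ref{cor-f-zerofree} with the positive mass theorem, using the asymptotically Schwarzschildean expansion to read off the ADM mass at each end and force it to vanish. First I would invoke Corollary~\ref{cor-f-zerofree}: the asymptotically Schwarzschildean condition \eqref{eq-AS} with the stated decay on $p_{ij}$ is a special case of the decay hypotheses \eqref{eq-AF-def} (with $\tau = 1$), so the framework of Section~4 applies. Since $f$ is assumed unbounded, the corollary would be contradicted \emph{unless} its defining hypothesis fails, i.e.\ unless it is \emph{not} the case that outside a compact set either $\rho > 0$ or $(M,g)$ is non-flat. Concretely, the unboundedness of $f$ produces (via Proposition~\ref{prop-AF-static-zeroset} and Theorem~\ref{thm-stab}) a noncompact component $\Sigma$ of $f^{-1}(0)$ along which $(M,g)$ is flat and $\rho = 0$; the point is that the energy condition \eqref{eq-EC} together with $R \ge 0$ will let me upgrade this to global information.

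The key step is the mass computation. From the expansion \eqref{eq-AS}, the ADM mass of the end $E_\alpha$ is exactly $m_\alpha$ (the error term $p_{ij} = O(|x|^{-2})$ with the matching derivative decay lies below the threshold that contributes to the mass integral, so only the Schwarzschildean leading term matters). By \eqref{eq-rho} and \eqref{eq-EC} we have $R = 2\rho \ge 0$ on all of $M$. The positive mass theorem \cite{SchoenYau79, Witten81} then gives $m_\alpha \ge 0$ at each end, with $m_\alpha = 0$ at every end only if $(M,g)$ is isometric to flat $\R^3$. So the crux is to show each $m_\alpha = 0$. Here I would exploit the flatness along $\Sigma$ from Theorem~\ref{thm-stab}: since $\Sigma$ is a noncompact zero-set component, by Theorem~\ref{thm-stab} it is isometric to $\R^2$, is totally geodesic, strictly area minimizing, and $(M,g)$ is flat along it with $\rho = \gamma = 0$ there. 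Moreover $\chi(\Sigma) = 1$ and (from the proof of Theorem~\ref{thm-stab}) $\Sigma$ runs out to infinity on exactly $k = 1$ end, forcing $(M,g)$ to have a single end.

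With a single end, the strict area-minimizing property of the plane $\Sigma$ is what kills the mass. The heuristic is that a positive-mass asymptotically Schwarzschildean metric has strictly positive scalar curvature concentrated near infinity in a way incompatible with containing a strictly area-minimizing, asymptotically planar, totally geodesic $\R^2$; more precisely, the existence of a complete stable minimal surface that is a plane, combined with $R \ge 0$, already forced $\int_\Sigma R\, d\sigma = 0$ and $R = 0$ along $\Sigma$ in the proof of Theorem~\ref{thm-stab}. I would argue that this, together with $R = 2\rho \ge 0$ globally and the structure of an asymptotically flat manifold containing such a minimizing plane, is consistent only with $m_\alpha = 0$; a clean way is to show the static potential $f$ must in fact be a nontrivial \emph{bounded} harmonic-type function forcing the mass to vanish, or to directly apply the rigidity case of the positive mass theorem once one observes that a nonzero mass would make every large coordinate sphere strictly mean convex in a manner that (as in Theorem~\ref{non-minimal}) obstructs the minimizing plane from reaching infinity. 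Either way, $m = 0$, and the equality case of the positive mass theorem then yields $(M,g) \cong \R^3$.

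The main obstacle I expect is the mass-vanishing step: getting from ``there is a noncompact zero-set component along which the metric is flat'' to ``the ADM mass is zero at the unique end'' is the substantive geometric input, and it is not purely formal. The subtlety is that flatness and $R = 0$ only along $\Sigma$ do not immediately give vanishing mass; one must either leverage the strict area-minimizing property of the plane $\Sigma$ against a hypothetical positive mass (a barrier/foliation argument in the spirit of Theorem~\ref{non-minimal}, where mean-convex spheres near infinity would force $\Sigma$ not to minimize), or extract from $f$ itself enough global control to conclude the mass is zero. Reconciling the single-end conclusion $k = 1$ with a possibly multi-ended hypothesis in \eqref{eq-AS} is the place where the argument must be made airtight.
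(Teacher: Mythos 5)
Your overall strategy (Corollary \ref{cor-f-zerofree} plus the positive mass theorem) is the right one, but the crucial step --- showing that some $m_\alpha$ vanishes --- is left as an acknowledged heuristic, and the route you sketch for it is not the one that closes the argument. The paper's mechanism is much more direct and you miss it: the asymptotically Schwarzschildean expansion \eqref{eq-AS} gives an \emph{explicit} formula for the Ricci curvature near infinity,
$\Ric(\p_{x_i},\p_{x_j}) = \frac{m_\alpha}{|x|^3}\bigl(1+\frac{m_\alpha}{2|x|}\bigr)^{-2}\bigl(\delta_{ij}-3\frac{x_ix_j}{|x|^2}\bigr)+O(|x|^{-4})$,
so if $m_\alpha \neq 0$ then $g$ is non-flat at \emph{every} point sufficiently far out on $E_\alpha$. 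Hence if $m_\alpha \neq 0$ for all $\alpha$, the hypothesis of Corollary \ref{cor-f-zerofree} holds, $f$ is bounded, and you contradict the assumption. Therefore $m_\alpha = 0$ for \emph{some} $\alpha$, and since $R \ge 0$ the rigidity case of the positive mass theorem (which requires only \emph{one} end of zero mass) gives $(M,g)\cong\R^3$. No analysis of the minimizing plane $\Sigma$ is needed at this stage; it was already consumed inside the proof of Corollary \ref{cor-f-zerofree}.

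Two further inaccuracies in your write-up. First, you state that PMT rigidity requires $m_\alpha = 0$ ``at every end''; it does not --- vanishing of the mass at a single end already forces $(M,g)\cong\R^3$, which is exactly why the paper only needs \emph{some} $m_\alpha = 0$. Second, the conclusion $k=1$ in the proof of Theorem \ref{thm-stab} counts the ends to which $\Sigma$ extends, not the ends of $M$, so it does not ``force $(M,g)$ to have a single end'' as you claim. Your alternative route --- killing the mass by playing the strictly area-minimizing plane against mean-convex spheres near infinity --- is essentially Carlotto's rigidity theorem (see Remark \ref{conj} of the paper), a substantial external result that you neither cite precisely nor prove; as written, that step is a genuine gap.
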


\begin{proof}
On each $ E_\alpha$,  \eqref{eq-AS}  implies that the Ricci curvature of $g$  satisfies 
\be \label{eq-Ricci-AS}
\Ric(\p_{x_i}, \p_{x_j} )  (x) = \frac{m_\alpha }{ | x |^3 } 
\lf[ 1 + \frac{m_\alpha }{ 2 | x |} \ri]^{-2} \lf( \delta_{ij} - 3 \frac{ x_i x_j }{ | x |^2 } \ri) 
+ O ( | x |^{-4} ) .
\ee
(See  \cite[Lemma 1.2]{Huisken-Yau1996} for instance).
If  $ m_\alpha \neq 0 $, $\forall \ \alpha = 1, \ldots, k $, then 
$g$ is non-flat at each point outside a compact set in $M$.
Thus $f$ must be bounded on $M$ by Corollary \ref{cor-f-zerofree}. This  contradicts  the assumption
$f$ is unbounded.  Therefore, $ m_\alpha = 0 $ for some $ \alpha$. 

On the other hand, by \eqref{eq-AS} and  the fact $ R \ge 0 $,
 the ADM mass (\cite{ADM61}) of $g$ at the end  $E_\alpha$ 
exists and is equal to $ m_\alpha$.  
Hence  it follows from   the  positive mass theorem 
that $(M, g)$ is isometric to $ \R^3$. 
\end{proof}

\begin{remark}
Corollary \ref{cor-AS} may be compared with \cite[Theorem 1.1]{static-af}. 
Corollary \ref{cor-AS}
is a  global result for  a complete, boundaryless,  asymptotically Schwarzschildean manifold on which the 
static potential equations \eqref{ein1} and \eqref{ein2} allow a nontrivial pair $(\rho, \gamma)$.
 Theorem 1.1 in \cite{static-af}, proved  for $(\rho, \gamma) = (0, 0)$,  has a local feature that it is 
 applicable on an  asymptotically Schwarzschildean  end.
\end{remark}

\begin{remark}\label{conj}
We note that Corollary \ref{cor-AS} also follows from Theorem \ref{thm-minimizing} and a result of Carlotto 
in \cite{Carlotto}. In \cite[Theorem 1]{Carlotto}, Carlotto proved that if $(M, g)$ is an asymptotically Schwarzschildean 
$3$-manifold of nonnegative scalar curvature in which there exists a complete, noncompact,  properly embedded stable minimal surface, then $(M, g)$ is isometric to $\R^3$. 
The proof of  \cite[Theorem 1]{Carlotto} also makes  use of the positive mass theorem. 

Work of Carlotto and Schoen \cite{CarSchoen} shows in a dramatic fashion that Carlotto's result no longer holds if the metric is merely assumed to obey the asymptotic condition \eqref{eq-AF-def} for $\tau \in (\frac12, 1)$.   However, Schoen \cite{SchoenMSRI} has raised the question as to whether there can be an area minimizing (not just stable) asymptotically planar minimal surface in a nontrivial asymptotically flat  manifold (obeying this weaker decay) with nonnegative scalar curvature.  In view of Theorem \ref{thm-minimizing}, these considerations lead us to believe that Corollary \ref{cor-AS} remains valid under the weaker decay  \eqref{eq-AF-def}.  In the vacuum case ($\rho = 0, \gamma = 0$) a spacetime approach which implies such rigidity follows from \cite{Beig-Chrusciel-97, Beig-Chrusciel-97b} and references therein.
\end{remark}

\medspace
\noindent
{\it Final remark:}
This paper was submitted for publication to CAG on December 11,
2014 and was accepted for publication on October 27, 2015.  As this paper now goes to press a new paper of Chodosh and Eichmair has appeared on the arXiv (cf.  arXiv:1510.07406) which directly relates to the question of Schoen commented upon in Remark \ref{conj}.   The authors prove that the only asymptotically flat 
Riemannian three-manifold (having the general asymptotics  \eqref{eq-AF-def}), with non-negative scalar curvature that admits a non-compact {\it area-minimizing \it boundary} is flat $\bbR^3$.  Given our understanding of their Theorem 1.2, it can be combined with our Theorem \ref{thm-stab} to conclude that Corollary 4.2 holds under the weaker asymptotics \eqref{eq-AF-def}, as conjectured in Remark \ref{conj}.

\medskip

\noindent\emph{Acknowledgements.} 
The work of GJG was partially supported by NSF grant DMS-1313724 and by a Fellows Program grant from
the Simons Foundation (Grant No.  63943).  GJG would like to thank the University of
Vienna Gravitational Physics group for its gracious hospitality, during which part
of the work on this paper was carried out.  
The work of PM was partially supported by a  Simons Foundation Collaboration Grant for Mathematicians \#281105.   We would like to thank Piotr Chru\'sciel and Brian White for many helpful comments.  We would also like to thank Alessandro Carlotto, Michael Eichmair and Luen-Fai Tam for their interest in this work.  In particular, we would like to thank Carlotto for a careful reading of an earlier version of the paper.

\providecommand{\bysame}{\leavevmode\hbox to3em{\hrulefill}\thinspace}
\providecommand{\MR}{\relax\ifhmode\unskip\space\fi MR }
\providecommand{\MRhref}[2]{%
  \href{http://www.ams.org/mathscinet-getitem?mr=#1}{#2}
}
\providecommand{\href}[2]{#2}

\end{document}